\newtheorem{thm}{Theorem}[section]
\newtheorem{cor}[thm]{Corollary}
\newtheorem{lem}[thm]{Lemma}
\newtheorem{prop}[thm]{Proposition}
\theoremstyle{definition}
\newtheorem{exmpl}[thm]{Example}
\newtheorem{definition}[thm]{Definition}
\renewcommand{\epsilon}{\varepsilon}
\renewcommand{\phi}{\varphi}
\newcommand{\defeq}{\mathrel{\mathop:}=}
\newcommand{\clone}[1]{\mathscr{C}_{#1}} 
\newcommand{\cclone}[1]{\overline{\mathscr{C}_{#1}}}
\DeclareMathOperator{\inte}{int}
\DeclareMathOperator{\Clo}{\mathscr{C}}
\DeclareMathOperator{\cClo}{\overline{\mathscr{C}}}
\DeclareMathOperator{\HSP}{HSP}
\DeclareMathOperator{\HSPfin}{HSP^{fin}}
\DeclareMathOperator{\DHSPfin}{DHSP^{fin}}
\DeclareMathOperator{\HA}{H}
\DeclareMathOperator{\SA}{S}
\DeclareMathOperator{\PA}{P}
\DeclareMathOperator{\Pfin}{P^{fin}}
\DeclareMathOperator{\DA}{D}
\DeclareMathOperator{\id}{id}
\DeclareMathOperator{\Sym}{Sym}
\DeclareMathOperator{\Aut}{Aut}
\DeclareMathOperator{\Hom}{Hom}
\DeclareMathOperator{\pr}{pr}
\DeclareMathOperator{\Ker}{Ker}
\begin{document}

\setlist{noitemsep}

\title{A uniform Birkhoff theorem}

\author{Friedrich Martin Schneider}
\address{Institute of Algebra, TU Dresden, 01062 Dresden, Germany}

\date{\today}

\begin{abstract} Garret Birkhoff's HSP theorem characterizes the classes of models of algebraic theories as those being closed with respect to homomorphic images, subalgebras, and products. In particular, it implies that an algebra $\mathbf{B}$ satisfies all equations that hold in an algebra $\mathbf{A}$ of the same type if and only if $\mathbf{B}$ is a homomorphic image of a subalgebra of a (possibly infinite) direct power of $\mathbf{A}$. The former statement is equivalent to the existence of a natural map sending term functions of the algebra $\mathbf{A}$ to those of $\mathbf{B}$, and it is natural to wonder about continuity properties of this mapping. We show that this map is uniformly continuous if and only if every finitely generated subalgebra of $\mathbf{B}$ is a homomorphic image of a subalgebra of a finite power of $\mathbf{A}$ -- without any additional assumptions concerning the algebras $\mathbf{A}$ and $\mathbf{B}$. Moreover, provided that $\mathbf{A}$ is \emph{almost locally finite} (for instance if $\mathbf{A}$ is locally oligomorphic or locally finite), the considered map is uniformly continuous if and only if it is Cauchy-continuous. In particular, our results extend a recent theorem by Bodirsky and Pinsker beyond the countable $\omega$-categorical setting. \end{abstract}

\maketitle


\section{Introduction}

Garrett Birkhoff's HSP theorem~\cite{birkhoff} ranges among the most fundamental results in algebra and has evoked a tremendous body of theory. It states that a class of algebras of the same is the class of all models of an algebraic theory if and only if it is closed with respect to homomorphic images, subalgebras, and products. In particular, an algebra $\mathbf{B}$ is a model of the algebraic theory generated by an algebra $\mathbf{A}$ of the same type, i.e., $\mathbf{B}$ satisfies all equations that hold in $\mathbf{A}$, if and only if $\mathbf{B}$ is a homomorphic image of a subalgebra of a (possibly infinite) direct power of $\mathbf{A}$. Of course, this provides a constructive method for studying algebraic theories and hence constitutes a useful source of counterexamples. For a more elaborate account on classical applications of Birkhoff's theorem we refer to \cite{burris}.

The present paper's purpose is to prove a topological variant of Birkhoff's theorem. For this to make sense, we need to view Birkhoff's theorem from a slightly different perspective, which is due to \cite{TopologicalBirkhoff}. To explain this, let $\Sigma = (\Sigma_{n})_{n \in \mathbb{N}}$ be a \emph{signature}, that is, a sequence of disjoint sets. Given any set $X$ of variables, we denote by $T_{\Sigma}(X)$ the set of \emph{$\Sigma$-terms} over $X$, i.e., the smallest set $T$ subject to the following conditions: 
\begin{enumerate}
	\item[---] \ $X \subseteq T$.
	\item[---] \ $\sigma t_{1} \ldots t_{n} \in T$ for all $n \in \mathbb{N}$, $\sigma \in \Sigma_{n}$ and $t_{1},\ldots,t_{n} \in T$.
\end{enumerate} Suppose $\mathbf{A}$ to be an \emph{algebra of type $\Sigma$}, that is, a pair consisting of a set $A$ and a family of functions $\sigma^{\mathbf{A}} \colon A^{n} \to A$ for $\sigma \in \Sigma_{n}$, $n \in \mathbb{N}$. Furthermore, let $(x_{n} \mid n \geq 1 )$ be a sequence of pairwise distinct variables. For each $n \geq 1$, we put $X_{n} \defeq \{ x_{1},\ldots,x_{n} \}$ and define a map \begin{displaymath}
	\nu^{\mathbf{A}}_{n} \colon T_{\Sigma}(X_{n}) \to A^{A^{n}}
\end{displaymath} by recursive term evaluation as follows: if $a \in A^{n}$, then $\nu^{\mathbf{A}}_{n}(x_{i})(a) \defeq a_{i}$ for all $i \leq n$, and \begin{displaymath}
	\nu^{\mathbf{A}}_{n}(\sigma t_{1}\ldots t_{n})(a) \defeq \sigma^{\mathbf{A}}\left(\nu^{\mathbf{A}}_{n}(t_{1})(a), \ldots, \nu^{\mathbf{A}}_{n}(t_{n})(a)\right)
\end{displaymath} for $\sigma \in \Sigma_{n}$ and $t_{1},\ldots,t_{n} \in T_{\Sigma}(X_{n})$. We shall denote the image of $\nu^{\mathbf{A}}_{n}$ by $\clone{n}(\mathbf{A})$. The \emph{clone} of the algebra $\mathbf{A}$ is the set \begin{displaymath}
	\Clo (\mathbf{A}) = \bigcup_{n \geq 1} \clone{n}(\mathbf{A}) ,
\end{displaymath} i.e., the set of all finitary operations on $A$ arising from term evaluation with respect to $\mathbf{A}$. Evidently, by jointly extending the maps $\left. \left( \nu^{\mathbf{A}}_{n} \, \right| n \geq 1 \right)$ to the direct sum (a.k.a coproduct) we obtain a surjective mapping \begin{displaymath}
	\nu^{\mathbf{A}} \colon \coprod_{n \geq 1} T_{\Sigma}(X_{n}) \to \Clo (\mathbf{A}) , \quad (n,t) \mapsto \nu^{\mathbf{A}}_{n}(t) .
\end{displaymath} Furthermore, we need to recall some terminology and notation concerning varieties. Given a class $\mathcal{V}$ of algebras of the same type, we denote by $\PA (\mathcal{V})$ the class of all products of algebras from $\mathcal{V}$, by $\SA(\mathcal{V})$ the class of all subalgebras of members of $\mathcal{V}$, and by $\HA(\mathcal{V})$ the class of all homomorphic images of algebras from $\mathcal{V}$. A class $\mathcal{V}$ of algebras being closed with respect to the formation of homomorphic images, subalgebras, and arbitrary products is called \emph{variety}. It is easy to see that $\HSP (\mathcal{V})$ is the smallest variety containing a given class $\mathcal{V}$ of algebras.

Birkhoff's theorem~\cite{birkhoff} implies that an algebra $\mathbf{B}$ belongs to the variety generated by an algebra $\mathbf{A}$ of the same type if and only if $\mathbf{B}$ satisfies all identities (over the common signature) that hold in $\mathbf{A}$. Utilizing the notation introduced above, we can restate this as follows.

\begin{thm}[Birkhoff]\label{theorem:birkhoff} Let $\mathbf{A}$ and $\mathbf{B}$ be two algebras of the same type. Then $\mathbf{B} \in \HSP (\mathbf{A})$ if and only if $\nu^{\mathbf{B}}$ factors through $\nu^{\mathbf{A}}$, i.e., there exists a (necessarily unique and surjective) mapping $\phi \colon \Clo (\mathbf{A}) \to \Clo (\mathbf{B})$ such that $\nu^{\mathbf{B}} = \phi \circ \nu^{\mathbf{A}}$. \end{thm}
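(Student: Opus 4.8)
The plan is to reduce the statement to the equational form of Birkhoff's theorem already recorded just above, namely that $\mathbf{B}\in\HSP(\mathbf{A})$ holds precisely when every identity valid in $\mathbf{A}$ is also valid in $\mathbf{B}$. The bridge between this equational condition and the factorisation of $\nu^{\mathbf{B}}$ through $\nu^{\mathbf{A}}$ is the elementary fact that, for a surjection $f\colon X\to Y$ and an arbitrary map $g\colon X\to Z$, there exists a map $h\colon Y\to Z$ with $g = h\circ f$ if and only if the kernel equivalence $\{(u,v)\mid f(u)=f(v)\}$ is contained in $\{(u,v)\mid g(u)=g(v)\}$; moreover, such an $h$ is then unique, and $h$ is surjective as soon as $g$ is, since surjectivity of $f$ gives $\operatorname{im} h = h(f(X)) = g(X) = \operatorname{im} g$.

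First I would apply this observation with $f=\nu^{\mathbf{A}}$, $g=\nu^{\mathbf{B}}$ and $h=\phi$. Both $\nu^{\mathbf{A}}$ and $\nu^{\mathbf{B}}$ are surjective by the very definition of the clones as their images, so the existence of the desired $\phi$ is equivalent to the kernel inclusion $\ker\nu^{\mathbf{A}}\subseteq\ker\nu^{\mathbf{B}}$. The two parenthetical assertions in the statement — that $\phi$ is necessarily unique and surjective — then come for free from the general principle recalled above.

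Next I would unwind the kernel inclusion into equational terms. An element $\nu^{\mathbf{A}}(n,t)=\nu^{\mathbf{A}}_{n}(t)$ is an $n$-ary operation on $A$, and operations of distinct arities have distinct domains $A^{m}\neq A^{n}$; hence $\nu^{\mathbf{A}}(m,s)=\nu^{\mathbf{A}}(n,t)$ forces $m=n$, and for a common arity $n$ the equality $\nu^{\mathbf{A}}_{n}(s)=\nu^{\mathbf{A}}_{n}(t)$ says exactly that $s$ and $t$ induce the same function on $A$, i.e.\ that $\mathbf{A}$ satisfies the identity $s\approx t$. Reading the kernel of $\nu^{\mathbf{B}}$ in the same way, the inclusion $\ker\nu^{\mathbf{A}}\subseteq\ker\nu^{\mathbf{B}}$ becomes literally the statement that every identity holding in $\mathbf{A}$ holds in $\mathbf{B}$. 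Chaining this equivalence with the one from the second paragraph and with the equational Birkhoff theorem quoted above closes the argument.

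I expect no serious obstacle: the substantive content is carried entirely by the classical HSP theorem, which is being invoked rather than reproved, so what remains is the purely formal translation between the factorisation property and equation preservation. The only point demanding genuine care is the arity bookkeeping of the previous paragraph — verifying that the arity of a term operation is recoverable from the operation itself, including in the degenerate case $|A|\le 1$ (where $A^{m}$ and $A^{n}$ remain distinct singletons for $m\neq n$), so that $\ker\nu^{\mathbf{A}}$ really does decompose arity by arity and the reduction to identities is valid without side conditions on $\mathbf{A}$ and $\mathbf{B}$.
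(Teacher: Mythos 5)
Your proposal is correct and takes essentially the same route as the paper, which gives no separate proof of Theorem~\ref{theorem:birkhoff} but presents it as a direct restatement of the classical equational HSP theorem; your kernel--factorisation translation (a map $\phi$ with $\nu^{\mathbf{B}}=\phi\circ\nu^{\mathbf{A}}$ exists iff $\ker\nu^{\mathbf{A}}\subseteq\ker\nu^{\mathbf{B}}$, which by the arity-by-arity decomposition of the kernel says exactly that every identity valid in $\mathbf{A}$ is valid in $\mathbf{B}$) is precisely the translation the paper leaves implicit. One small quibble: your parenthetical on $|A|\le 1$ really covers only $|A|=1$, since for an empty carrier the sets $A^{m}$ all coincide and arity recovery fails, but that degenerate case is excluded by the standard non-emptiness convention the paper also adopts, so your argument stands.
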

	
Concerning algebras $\mathbf{A}$ and $\mathbf{B}$ where $\mathbf{B} \in \HSP (\mathbf{A})$, the unique mapping $\phi \colon \Clo (\mathbf{A}) \to \Clo (\mathbf{B})$ whose existence is asserted by Birkhoff's theorem is called the \emph{natural homomorphism} from $\Clo (\mathbf{A})$ onto $\Clo(\mathbf{B})$. This is due to the fact that $\phi$ preserves the arity of every function, sends projection maps to the corresponding projections onto the same coordinate, and satisfies \begin{displaymath}
	\phi (f(g_{1},\ldots,g_{n})) = \phi(f)(\phi(g_{1}),\ldots,\phi(g_{n}))
\end{displaymath} for any $f \in \clone{n}(\mathbf{A})$ and all $g_{1},\ldots,g_{n} \in \clone{m}(\mathbf{A})$ where $m,n \geq 1$.

In the present paper we shall be concerned with the uniform continuity of natural clone homomorphisms. For this purpose, we need to endow the clone of an algebra with a particular uniform structure. The uniformity under consideration arises in a natural manner as the uniformity of point-wise convergence. We provide a Birkhoff-type characterization of all those members $\mathbf{B}$ of the variety $\HSP (\mathbf{A})$ generated by a given algebra $\mathbf{A}$, for which the natural homomorphism from $\Clo (\mathbf{A})$ to $\Clo (\mathbf{B})$ is uniformly continuous (Theorem~\ref{theorem:uniform.birkhoff} and also Corollary~\ref{corollary:uniform.birkhoff}). This result holds in full generality, i.e., it does not depend on any additional assumptions about the algebras $\mathbf{A}$ and $\mathbf{B}$. Moreover, we show that under the hypothesis of $\mathbf{A}$ being almost locally finite (Definition~\ref{definition:almost.locally.finite}) the considered natural homomorphism is uniformly continuous if and only if it is Cauchy-continuous (Lemma~\ref{lemma:cauchy.continuity}), which immediately yields a Cauchy-continuous version of Birkhoff's theorem (Theorem~\ref{theorem:almost.locally.finite.algebras}). Since the class of almost locally finite algebras encompasses all locally oligomorphic algebras (as well as all locally finite algebras), our results particularly imply a recent theorem by Bodirsky and Pinsker~\cite{TopologicalBirkhoff}.

This paper is organized as follows: in Section~\ref{section:uniform.spaces} we recall some prerequisites concerning uniform spaces, in Section~\ref{section:uniform.birkhoff} we prove the above-mentioned uniformly continuous version of Birkhoff's HSP theorem, and in Section~\ref{section:balanced.group.actions} we establish some general results regarding compactness properties for a uniformly equicontinuous group actions on uniform spaces, which we then apply to almost locally finite algebras in Section~\ref{section:almost.locally.finite.algebras}.


\section{Uniform spaces}\label{section:uniform.spaces}

In this preliminary section we provide some basic concepts concerning uniform spaces. For further reading we refer to \cite{Bourbaki1,Bourbaki2,james}. A \emph{uniformity} on a set $X$ is a filter $\mathcal{U}$ on the set $X \times X$ satisfying the following conditions: \begin{enumerate}
	\item[---] \ Every member of $\mathcal{U}$ contains the diagonal $\Delta_{X} = \{ (x,x) \mid x \in X \}$.
	\item[---] \ If $\alpha$ is a member of $\mathcal{U}$, then so is $\alpha^{-1} = \{ (y,x) \mid (x,y) \in \alpha \}$.
	\item[---] \ For every $\alpha \in \mathcal{U}$, there exists some $\beta \in \mathcal{U}$ such that $\alpha$ contains \begin{displaymath}
					\ \beta \circ \beta = \{ (x,y) \in X \times X \mid \exists z \in X \colon \, \{ (x,z), (z,y) \} \subseteq \beta \} .
				\end{displaymath}
\end{enumerate} By a \emph{uniformity base} on $X$ we mean a filter base on $X \times X$ the filter generated by which is a uniformity on $X$. We refer to the power set of $X \times X$ as the \emph{discrete} uniformity on $X$. A \emph{uniform space} is a non-empty set $X$ equipped with a uniformity on $X$, whose elements are called the \emph{entourages} of~$X$. For a uniform space $X$, the \emph{induced topology} on $X$ is defined as follows: a subset $S \subseteq X$ is \emph{open} in $X$ if for every $x \in S$ there exists an entourage $\alpha$ of~$X$ such that $B_{\alpha}(x) \defeq \{ y \in X \mid (x,y) \in \alpha \}$ is contained in $S$. As usual, the uniform space $X$ is called \emph{precompact} if for every entourage $\alpha$ of $X$ there is a finite subset $F \subseteq X$ such that $X = B_{\alpha}(F) \defeq \bigcup \{ B_{\alpha}(x) \mid x \in F \}$. Recall that \emph{Cauchy filter} on $X$ is a filter $\mathcal{F}$ on the set $X$ such that for every entourage of $\alpha$ of $X$ there exists some $F \in \mathcal{F}$ such that $F \times F \subseteq \alpha$. The uniform space $X$ is called \emph{complete} if every Cauchy filter on $X$ is convergent with respect to the induced topology. In order to also address some matters of continuity, let $Y$ be another uniform space. A map $f \colon X \to Y$ is called \emph{uniformly continuous} if for every entourage $\alpha$ of~$Y$ there exists some entourage $\beta$ of~$X$ such that $(f \times f)(\beta) \subseteq \alpha$. More generally, a set $F \subseteq Y^{X}$ is called \emph{uniformly equicontinuous} if for every entourage $\alpha$ of $Y$ there exists some entourage $\beta$ of~$X$ such that $(f \times f)(\beta) \subseteq \alpha$ whenever $f \in F$. Furthermore, a map $f \colon X \to Y$ is called \emph{Cauchy-continuous} if the image filter \begin{displaymath}
f(\mathcal{F}) \defeq \{ S \subseteq Y \mid \exists F \in \mathcal{F} \colon \, f(F) \subseteq S \}
\end{displaymath} is a Cauchy filter on $Y$ whenever $\mathcal{F}$ is a Cauchy filter on $X$. It is straightforward to check that uniform continuity implies Cauchy-continuity, which in turn implies continuity with regard to the respective topologies.

Let us recall some standard constructions for uniform spaces. Given a uniform space $X$ and any subset $S \subseteq X$, we equip $S$ with the corresponding \emph{subspace uniformity} \begin{displaymath}
	\{ \alpha \cap (S \times S) \mid \alpha \text{ entourage of } X \} ,
\end{displaymath} and we observe that, concerning any other uniform space $Y$, a map $f \colon Y \to S$ is uniformly continuous if and only if $Y \to X, \, y \mapsto f(x)$ is uniformly continuous. Now, let $( X_{i} \mid i \in I )$ be any family of uniform spaces. We endow $\prod_{i \in I}X_{i}$ with the \emph{product uniformity}, i.e., the least uniformity on $\prod_{i \in I}X_{i}$ containing the sets \begin{displaymath}
	\left. \left\{ (x,y) \in \prod_{i \in I}X_{i} \times \prod_{i \in I}X_{i} \, \right| (x_{j},y_{j}) \in \alpha \right\} \qquad (\alpha \text{ entourage of } X_{j}, \, j \in I) ,
\end{displaymath} or equivalently, the least uniformity on $\prod_{i \in I}X_{i}$ so that the map $\pr_{i} \colon \prod_{i \in I}X_{i} \to X_{j}, \, x \mapsto x_{j}$ is uniformly continuous for every $j \in I$. Concerning another uniform space $Y$, it follows that a map $f \colon Y \to \prod_{i \in I}X_{i}$ is uniformly continuous if and only if $Y \to X_{j}, \, y \mapsto f(y)_{j}$ is uniformly continuous for any $j \in I$. Dually, let us equip the direct sum $\coprod_{i \in I} X_{i} = \{ (i,x) \mid i \in I, \, x \in X_{i} \}$ with the usual \emph{coproduct uniformity}, i.e., \begin{displaymath}
	\left. \left\{ \alpha \subseteq \coprod_{i \in I} X_{i} \times \coprod_{i \in I} X_{i} \, \right| \forall j \in I \colon \, \{ (x,y) \in X_{j} \mid ((j,x),(j,y)) \in \alpha \} \text{ is an entrge.~of } X_{j} \right\} ,
\end{displaymath} which is just the largest uniformity on $\coprod_{i \in I} X_{i}$ so that the injection $X_{j} \to \coprod_{i \in I}X_{i}, \, x \mapsto (j,x)$ is uniformly continuous for every $j \in I$. Considering another uniform space $Y$, it is easy to see that any map $f \colon Y \to \prod_{i \in I}X_{i}$ is uniformly continuous if and only if $X_{j} \to Y, \, x \mapsto f(j,x)$ is uniformly continuous for every $j \in I$. Let us make a remark concerning completeness in this context: the product as well as the coproduct of any family of complete uniform spaces is complete. Furthermore, completeness is inherited by closed uniform subspaces.

We conclude this section with a construction concerning non-Hausdorff uniform spaces. In fact, there is a standard procedure for uniform spaces to overcome a lack of the Hausdorff property: if $X$ is a uniform space, then one may consider the equivalence relation \begin{displaymath}
	{\sim} \defeq \bigcap \{ \alpha \mid \alpha \text{ entourage of } X \}
\end{displaymath} and endow the quotient $X_{\ast} \defeq X/{\sim}$ with the uniformity generated by the sets of the form \begin{displaymath}
	\left. \left. \alpha_{\ast} \defeq \left\{ (P,Q) \in X_{\ast} \times X_{\ast} \, \right| P \times Q \subseteq \alpha  \right\} = \left\{ (P,Q) \in X_{\ast} \times X_{\ast} \, \right| (P \times Q) \cap \alpha \ne \emptyset \right\}
\end{displaymath} where $\alpha$ is any entourage of $X$. The resulting uniform space $X_{\ast}$ is a Hausdorff space and therefore called the \emph{Hausdorff quotient} of $X$. It is easy to see that every uniformly continuous map from $X$ to a Hausdorff uniform space factors via a uniformly continuous map through the uniformly continuous map $\pi_{\ast} \colon X \to X_{\ast}, \, x \mapsto [x]_{\sim}$. Furthermore, notice that $\pi_{\ast}^{-1}(\pi_{\ast}(U)) = U$ for every open subset $U \subseteq X$. In particular, $\pi_{\ast}$ is open.

\begin{lem}\label{lemma:dense.compactness} Every uniform space containing a dense compact subset is compact. \end{lem}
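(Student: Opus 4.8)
The plan is to reduce the statement to the Hausdorff quotient $X_{\ast}$ constructed at the end of the section, where the familiar fact that a compact subset of a Hausdorff space is closed becomes available. So let $K \subseteq X$ be a dense compact subset. The main subtlety, and the reason the statement is not completely trivial, is that $X$ need not be Hausdorff: we cannot argue directly that $K$ is closed and hence equal to its closure $\overline{K} = X$, since a compact subset of a non-Hausdorff space may well fail to be closed. Passing to $X_{\ast}$ is designed to absorb exactly this difficulty, and I expect it to be the only step requiring genuine care.

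First I would record the relevant properties of the canonical map $\pi_{\ast} \colon X \to X_{\ast}$ established above: it is uniformly continuous, hence continuous; it is open; and it satisfies $\pi_{\ast}^{-1}(\pi_{\ast}(U)) = U$ for every open $U \subseteq X$. Since $\pi_{\ast}$ is continuous and $K$ is compact, the image $\pi_{\ast}(K)$ is a compact subset of $X_{\ast}$. Moreover, since $\pi_{\ast}$ is continuous and surjective, the density of $K$ yields $X_{\ast} = \pi_{\ast}(X) = \pi_{\ast}(\overline{K}) \subseteq \overline{\pi_{\ast}(K)}$, so that $\pi_{\ast}(K)$ is dense in $X_{\ast}$. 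As $X_{\ast}$ is Hausdorff, the compact set $\pi_{\ast}(K)$ is closed, whence $\pi_{\ast}(K) = \overline{\pi_{\ast}(K)} = X_{\ast}$. In particular $X_{\ast}$, being a continuous image of the compact space $K$, is itself compact.

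It then remains to transport compactness from $X_{\ast}$ back to $X$. Given any open cover $(U_{i})_{i \in I}$ of $X$, openness of $\pi_{\ast}$ shows that $(\pi_{\ast}(U_{i}))_{i \in I}$ is a family of open subsets of $X_{\ast}$, and surjectivity of $\pi_{\ast}$ shows that it covers $X_{\ast}$. Compactness of $X_{\ast}$ supplies a finite subset $F \subseteq I$ with $X_{\ast} = \bigcup_{i \in F} \pi_{\ast}(U_{i})$. Applying $\pi_{\ast}^{-1}$ and using the identity $\pi_{\ast}^{-1}(\pi_{\ast}(U_{i})) = U_{i}$ together with the compatibility of preimages with unions, I obtain $X = \pi_{\ast}^{-1}(X_{\ast}) = \bigcup_{i \in F} \pi_{\ast}^{-1}(\pi_{\ast}(U_{i})) = \bigcup_{i \in F} U_{i}$, so $(U_{i})_{i \in F}$ is the desired finite subcover, and $X$ is compact. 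Once the passage to $X_{\ast}$ has been made, the argument reduces entirely to standard facts about compact sets in Hausdorff spaces and to the two special properties of $\pi_{\ast}$, namely its openness and the identity $\pi_{\ast}^{-1}(\pi_{\ast}(U)) = U$ for open $U$.
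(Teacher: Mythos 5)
Your proof is correct and follows essentially the same route as the paper: pass to the Hausdorff quotient $X_{\ast}$, observe that the dense compact image $\pi_{\ast}(K)$ must equal $X_{\ast}$ (so $X_{\ast}$ is compact), and transport compactness back along $\pi_{\ast}$ using its openness and the identity $\pi_{\ast}^{-1}(\pi_{\ast}(U)) = U$ for open $U$. You merely spell out the Hausdorff step (compact implies closed, dense closed implies everything) that the paper leaves implicit.
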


\begin{proof} Let $X$ be any uniform space and let $S$ be a dense compact subset of $X$. Consider any open covering $\mathcal{U}$ on $X$. Since $\pi_{\ast}$ is open, $\pi_{\ast}(\mathcal{U})$ is an open covering of $X_{\ast}$. Moreover, $\pi_{\ast}(S)$ is a dense compact subset of the Hausdorff space $X_{\ast}$, and therefore $X_{\ast} = \pi (S)$ is compact. Hence, $\pi_{\ast}(\mathcal{U})$ admits a finite subcover $\mathcal{V}$. Consequently, $\pi_{\ast}^{-1}(\mathcal{V})$ is a finite subcover of $\mathcal{U} = \pi_{\ast}^{-1}(\pi_{\ast}(\mathcal{U}))$, which proves our claim. \end{proof}


\section{A uniform Birkhoff theorem}\label{section:uniform.birkhoff}

In this section we prove a uniformly continuous version of Birkhoff's theorem. For this to make sense, we need to endow the clone of an algebra with a particular uniform structure. The uniformity under consideration arises in a point-wise manner. To explain this, let $\mathbf{A}$ be an algebra. For each $n \geq 1$, regarding the carrier set $A$ as a discrete uniform space, we equip $A^{A^{n}}$ with the corresponding product uniformity and the subset $\clone{n}(\mathbf{A})$ with the induced subspace uniformity. Finally, we endow the disjoint union \begin{displaymath}
	\Clo (\mathbf{A}) = \bigcup_{n \geq 1} \clone{n}(\mathbf{A})
\end{displaymath} with the coproduct uniformity induced by the uniform spaces $( \clone{n}(\mathbf{A}) \mid n \geq 1)$. Of course, this uniformity coincides with the subspace uniformity inherited from the coproduct of the uniform spaces $\left. \left( A^{A^{n}} \, \right| n \geq 1 \right)$, each of which carries the corresponding product uniformity mentioned above. Furthermore, we shall deal with the topological closure of $\Clo (\mathbf{A})$ with respect to point-wise convergence. For each $n \geq 1$, we denote by $\cclone{n}(\mathbf{A})$ the closure of $\clone{n}(\mathbf{A})$ in the topological space $A^{A^{n}}$ and equip it with the subspace uniformity inherited from $A^{A^{n}}$. Moreover, we endow the disjoint union \begin{displaymath}
\cClo (\mathbf{A}) \defeq \bigcup_{n \geq 1} \cclone{n}(\mathbf{A})
\end{displaymath} with the coproduct uniformity induced by the uniform spaces $\left. \left( \cclone{n}(\mathbf{A}) \, \right| n \geq 1 \right)$. Evidently, $\cClo(\mathbf{A})$ is precisely the closure of $\Clo (\mathbf{A})$ in the coproduct of the uniform spaces $\left. \left( A^{A^{n}} \, \right| n \geq 1 \right)$, and its uniform structure agrees with the respective subspace uniformity.

\begin{lem}\label{lemma:varieties.closed.under.directed.colimits} Let $\mathbf{A}$ be an algebra and $\mathcal{V}$ a variety of algebras of the same type as $\mathbf{A}$. Then $\mathbf{A}$ is contained in $\mathcal{V}$ if and only if every finitely generated subalgebra of $\mathbf{A}$ is contained in $\mathcal{V}$. \end{lem}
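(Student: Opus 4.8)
The plan is to prove both directions of the equivalence. The forward direction is trivial: if $\mathbf{A} \in \mathcal{V}$, then since $\mathcal{V}$ is closed under subalgebras, every subalgebra of $\mathbf{A}$ — in particular every finitely generated one — lies in $\mathcal{V}$. So the substance is in the converse.

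Let me think about the converse carefully. We assume every finitely generated subalgebra of $\mathbf{A}$ is in $\mathcal{V}$, and we want $\mathbf{A} \in \mathcal{V}$. The key structural fact here is that $\mathbf{A}$ is the directed union (directed colimit) of its finitely generated subalgebras. So the lemma is essentially saying that varieties are closed under directed colimits of this specific kind — directed unions along inclusions.

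How to prove $\mathbf{A} \in \mathcal{V}$? The cleanest approach uses the semantic characterization: by Birkhoff's theorem, $\mathcal{V} = \HSP(\mathcal{V})$ is precisely the class of all algebras satisfying some set $\Theta$ of identities (the identities valid in all members of $\mathcal{V}$). An identity is a pair of terms $s, t$ using finitely many variables, and $\mathbf{A} \models s \approx t$ means $s$ and $t$ induce the same term function on $\mathbf{A}$. To check $\mathbf{A} \models s \approx t$, one only needs to verify that the two terms agree on every tuple of elements of $\mathbf{A}$; any such tuple involves only finitely many elements, which generate a finitely generated subalgebra $\mathbf{B}$. Since $\mathbf{B} \in \mathcal{V}$, we have $\mathbf{B} \models s \approx t$, and because term functions are computed purely within the subalgebra, the two terms agree on that tuple in $\mathbf{A}$ as well. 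Since this holds for every tuple, $\mathbf{A} \models s \approx t$; and since it holds for every identity in $\Theta$, we conclude $\mathbf{A} \in \mathcal{V}$.

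The main thing to get right — the only genuine obstacle — is the claim that term evaluation is local, i.e., that computing a term function on a tuple of elements all lying in a subalgebra $\mathbf{B} \leq \mathbf{A}$ yields the same result whether one works in $\mathbf{B}$ or in $\mathbf{A}$. This is immediate from the recursive definition of $\nu^{\mathbf{A}}_{n}$ given in the introduction together with the fact that the basic operations of $\mathbf{B}$ are the restrictions of those of $\mathbf{A}$, but it must be invoked explicitly since it is precisely what lets us transport the satisfaction of an identity from the finitely generated pieces up to $\mathbf{A}$. Alternatively, one could avoid the explicit appeal to Birkhoff and argue directly with the $\HA$, $\SA$, $\PA$ operators, but the identity-based argument is the most transparent and keeps the reliance on standard machinery minimal.
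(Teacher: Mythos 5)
Your proof is correct, but it takes a genuinely different route from the paper's. You argue semantically: invoke Birkhoff's theorem in its equational form ($\mathcal{V}$ is the class of models of the set $\Theta$ of identities valid throughout $\mathcal{V}$), and then observe that satisfaction of an identity is checked tuple-by-tuple, each tuple lies in a finitely generated subalgebra belonging to $\mathcal{V}$, and term evaluation is local to subalgebras --- a point you rightly flag as the one step that must be made explicit, and which follows by induction on terms from the recursive definition of $\nu^{\mathbf{A}}_{n}$. The paper instead argues purely with the closure operators: it forms the product $\mathbf{C} = \prod_{\mathbf{B} \in \mathcal{D}} \mathbf{B}$ over the directed set $\mathcal{D}$ of finitely generated subalgebras, takes the subalgebra $\mathbf{S} \leq \mathbf{C}$ of eventually constant threads (those $s$ with $s_{\mathbf{B}} = s_{\mathbf{B}_{0}}$ for all $\mathbf{B} \geq \mathbf{B}_{0}$), and maps each thread to its eventual value, yielding a surjective homomorphism $\mathbf{S} \to \mathbf{A}$; hence $\mathbf{A} \in \HA\SA\PA(\mathcal{D}) \subseteq \mathcal{V}$ using only the defining closure properties of a variety. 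The trade-off: your argument is shorter and conceptually transparent, but it leans on the full strength of Birkhoff's theorem (variety $=$ equational class), which is stronger than the clone-homomorphism formulation the paper actually states as Theorem~\ref{theorem:birkhoff}; the paper's construction is more hands-on but entirely elementary and self-contained, needing no equational logic at all. Your closing remark that one could ``argue directly with the $\HA$, $\SA$, $\PA$ operators'' is precisely the route the paper takes.
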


\begin{proof} Denote by $\mathcal{D}$ the directed set of all finitely generated subalgebras of $\mathbf{A}$. Clearly, if $\mathbf{A}$ is contained in $\mathcal{V}$, then so is any member of $\mathcal{D}$. In order to prove the converse implication, consider the subalgebra $\mathbf{S}$ of $\mathbf{C} \defeq \prod_{\mathbf{B} \in \mathcal{D}} \mathbf{B}$ given on the subset \begin{displaymath}
	S \defeq \{ s \in C \mid \exists \mathbf{B}_{0} \in \mathcal{D} \, \forall \mathbf{B} \in \mathcal{D} \colon \, \mathbf{B}_{0} \leq \mathbf{B} \Longrightarrow s_{\mathbf{B}_{0}} = s_{\mathbf{B}} \} .
\end{displaymath} Define a map $h \colon S \to A$ as follows: for each $s \in S$, let $h(s)$ be the unique element of $A$ with \begin{displaymath}
	\exists \mathbf{B}_{0} \in \mathcal{D} \, \forall \mathbf{B} \in \mathcal{D} \colon \, \mathbf{B}_{0} \leq \mathbf{B} \Longrightarrow h(s) = s_{\mathbf{B}} .
\end{displaymath} It is straightforward to check that $h \colon \mathbf{S} \to \mathbf{A}$ is a surjective homomorphism. Hence, if $\mathcal{D}$ is contained in $\mathcal{V}$, then $\mathbf{A}$ is a member of $\mathcal{V}$ as well. \end{proof}

Furthermore, we need to recall some terminology and notation concerning pseudo-varieties. Given a class $\mathcal{V}$ of algebras of the same type, we denote by $\Pfin (\mathcal{V})$ the class of all finite products of algebras from $\mathcal{V}$. A \emph{pseudo-variety} is a class $\mathcal{V}$ of algebras being closed with respect to homomorphic images, subalgebras, and finite products. It is well known and easy to see that the smallest variety containing a given class $\mathcal{V}$ of algebras is given by $\HSPfin (\mathcal{V})$.

\begin{lem}\label{lemma:uniform.continuity} Let $\mathbf{A}$ and $\mathbf{B}$ be algebras of the same type. If every finitely generated subalgebra of $\mathbf{B}$ is contained in $\HSPfin (\mathbf{A})$, then the natural homomorphism from $\Clo (\mathbf{A})$ to $\Clo (\mathbf{B})$ is uniformly continuous. \end{lem}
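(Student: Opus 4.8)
The plan is to use Birkhoff's theorem to produce $\phi$ and then exploit that it preserves arities. First I would note that $\phi$ exists at all: since every finitely generated subalgebra of $\mathbf{B}$ lies in $\HSPfin(\mathbf{A}) \subseteq \HSP(\mathbf{A})$ and $\HSP(\mathbf{A})$ is a variety, Lemma~\ref{lemma:varieties.closed.under.directed.colimits} gives $\mathbf{B} \in \HSP(\mathbf{A})$, whence Theorem~\ref{theorem:birkhoff} supplies the natural homomorphism $\phi \colon \Clo(\mathbf{A}) \to \Clo(\mathbf{B})$. Writing $\phi_{n} \colon \clone{n}(\mathbf{A}) \to \clone{n}(\mathbf{B})$ for its $n$-ary restriction and using that both clones carry coproduct uniformities, the universal property recalled in Section~\ref{section:uniform.spaces} reduces the claim to showing that each $\phi_{n}$ is uniformly continuous. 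Since $A$ and $B$ are discrete, a base of entourages of $\clone{n}(\mathbf{A})$ is given, for finite $W \subseteq A^{n}$, by the sets relating $f, g$ exactly when they agree on $W$, and likewise for $\clone{n}(\mathbf{B})$ with finite subsets of $B^{n}$. Hence the whole statement reduces to the finitary assertion: for every $n \geq 1$ and every finite $V \subseteq B^{n}$ there is a finite $W \subseteq A^{n}$ such that any two $n$-ary term functions of $\mathbf{A}$ agreeing on $W$ are sent by $\phi_{n}$ to term functions of $\mathbf{B}$ agreeing on $V$.

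To construct $W$, I would let $\mathbf{B}_{0}$ be the subalgebra of $\mathbf{B}$ generated by the (finitely many) entries of the tuples in $V$. Being finitely generated, $\mathbf{B}_{0}$ lies in $\HSPfin(\mathbf{A})$ by hypothesis, so there exist $d \geq 1$, a subalgebra $\mathbf{S} \leq \mathbf{A}^{d}$, and a surjective homomorphism $h \colon \mathbf{S} \to \mathbf{B}_{0}$. For each entry of each tuple in $V$ I would pick a preimage under $h$, an element of $S \subseteq A^{d}$; collecting the $d$ coordinates of these preimages and regrouping them according to the $n$ variable positions produces a finite set $W \subseteq A^{n}$ of at most $d \cdot |V|$ elements. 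The finiteness of $W$ hinges precisely on the exponent $d$ being finite, and this is exactly the content of the hypothesis.

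The verification then rests on two standard features of term evaluation. First, because $\mathbf{S} \leq \mathbf{A}^{d}$, the term function $\nu^{\mathbf{S}}_{n}(t)$ is computed coordinatewise in $\mathbf{A}^{d}$, so the $l$-th coordinate of $\nu^{\mathbf{S}}_{n}(t)(s_{1}, \dots, s_{n})$ equals $\nu^{\mathbf{A}}_{n}(t)(s_{1}(l), \dots, s_{n}(l))$ for each $l \leq d$. Second, homomorphisms commute with term evaluation, so if a tuple of preimages lies over $b \in V$, then $\nu^{\mathbf{B}}_{n}(t)(b) = h\bigl(\nu^{\mathbf{S}}_{n}(t)(s_{1}, \dots, s_{n})\bigr)$. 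Combining the two, the value of $\nu^{\mathbf{B}}_{n}(t)$ at each $b \in V$ is determined by the values taken by $\nu^{\mathbf{A}}_{n}(t)$ at the points of $W$. Thus two terms $t, t'$ with $\nu^{\mathbf{A}}_{n}(t)$ and $\nu^{\mathbf{A}}_{n}(t')$ agreeing on $W$ force $\nu^{\mathbf{B}}_{n}(t)$ and $\nu^{\mathbf{B}}_{n}(t')$ to agree on $V$, which is exactly the implication needed for $\phi_{n}$, since $\phi_{n}(\nu^{\mathbf{A}}_{n}(t)) = \nu^{\mathbf{B}}_{n}(t)$ by the defining property $\nu^{\mathbf{B}} = \phi \circ \nu^{\mathbf{A}}$.

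I do not anticipate a serious obstacle once the reduction to finite $V$ is set up; the remainder is bookkeeping in the coordinatewise evaluation of terms and the lifting of generators through $h$. The only point deserving emphasis is the role of the hypothesis: it is the finiteness of the power $\mathbf{A}^{d}$ that keeps $W$ finite and therefore yields a genuine entourage rather than a mere neighbourhood in the topology of pointwise convergence; an infinite power would in general destroy uniform continuity.
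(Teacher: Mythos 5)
Your proposal is correct and in substance coincides with the paper's proof: both reduce a finite test set $V \subseteq B^{n}$ to the finitely generated subalgebra of $\mathbf{B}$ generated by its entries, invoke the hypothesis to present that subalgebra as a homomorphic image $h \colon \mathbf{S} \to \mathbf{B}_{0}$ with $\mathbf{S} \leq \mathbf{A}^{d}$ and $d$ finite, and pull the finitely many evaluation points back through $h$ and the $d$ coordinates to produce the witnessing finite set $W \subseteq A^{n}$, using that homomorphisms commute with term evaluation and that evaluation in $\mathbf{A}^{d}$ is coordinatewise. The only difference is organizational: the paper establishes uniform continuity separately for the cases $\HA$, $\SA$, $\Pfin$ and then composes the natural homomorphisms (finally factoring through the finitely generated subalgebra), whereas you unwind this composition into a single direct computation, which incidentally yields the slightly smaller witness set of at most $d \cdot \lvert V \rvert$ points in place of the paper's full power $H^{n}$.
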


\begin{proof} Note that Lemma~\ref{lemma:varieties.closed.under.directed.colimits} asserts that $\mathbf{B}$ is a member of $\HSP (\mathbf{A})$. Consider the natural homomorphism $\phi \colon \Clo (\mathbf{A}) \to \Clo (\mathbf{B})$, whose existence is due to Theorem~\ref{theorem:birkhoff}. Our proof for uniform continuity of $\phi$ proceeds by induction.
	
First assume that $\mathbf{B} \in \Pfin (\mathbf{A})$. Then there is a finite set $S$ such that $\mathbf{B} = \mathbf{A}^{S}$. Let $n \geq 1$. Consider the entourage $\alpha \defeq \{ (f,g) \in \clone{n}(\mathbf{B}) \mid f\vert_{F} = g\vert_{F} \}$ in $\clone{n}(\mathbf{B})$ for some finite subset $F \subseteq B^{n}$. Obviously, $H \defeq \{ b_{j}(s) \mid b \in F , \, s \in S, \, 1 \leq j \leq n \}$ is a finite subset of $A$, and thus $H^{n}$ is a finite subset of $A^{n}$. Hence, $\beta \defeq \{ (f,g) \in \clone{n}(\mathbf{A}) \mid f\vert_{H^{n}} = g\vert_{H^{n}} \}$ is an entourage of $\clone{n}(\mathbf{A})$. We argue that $(\phi \times \phi)(\beta) \subseteq \alpha$. To this end, let $(f,g) \in \beta$. If $b \in F$, then \begin{displaymath}
	\phi(f)(b)(s) = f(b_{1}(s),\ldots,b_{n}(s)) = g(b_{1}(s),\ldots,b_{n}(s)) = \phi(g)(b)(s)
\end{displaymath} for every $s \in S$, and thus $\phi (f)(b) = \phi (g)(b)$. This shows that $(\phi(f),\phi(g)) \in \alpha$. Accordingly, $\phi$ is uniformly continuous.

Now suppose that $\mathbf{B} \in \SA (\mathbf{A})$. Let $n \geq 1$. Consider a finite subset $F \subseteq B^{n}$ and the corresponding entourage $\alpha \defeq \{ (f,g) \in \clone{n}(\mathbf{B}) \mid f\vert_{F} = g\vert_{F} \}$ in $\clone{n}(\mathbf{B})$. Obviously, $F$ is also a finite subset of $A$, and thus $\beta \defeq \{ (f,g) \in \clone{n}(\mathbf{A}) \mid f\vert_{F} = g\vert_{F} \}$ is an entourage of $\clone{n}(\mathbf{A})$. Evidently, $(\phi \times \phi)(\beta) \subseteq \alpha$. Hence, $\phi$ is uniformly continuous.

Next assume that $\mathbf{B} \in \HA (\mathbf{A})$. Then there exists a surjective homomorphism $h \colon \mathbf{A} \to \mathbf{B}$. Let $n \geq 1$. Consider the entourage $\alpha \defeq \{ (f,g) \in \clone{n}(\mathbf{B}) \mid f\vert_{F} = g\vert_{F} \}$ in $\clone{n}(\mathbf{B})$ for some finite subset $F \subseteq B^{n}$. Since the homomorphism $h^{[n]} \colon A^{n} \to B^{n}, \, (a_{1},\ldots,a_{n}) \mapsto (h(a_{1}),\ldots,h(a_{n}))$ is surjective as well, there exists a finite subset $E \subseteq A^{n}$ such that $h^{[n]}(E) = F$. Of course, $\beta \defeq \{ (f,g) \in \clone{n}(\mathbf{A}) \mid f\vert_{E} = g\vert_{E} \}$ is an entourage of $\clone{n}(\mathbf{A})$. We show that $(\phi \times \phi)(\beta) \subseteq \alpha$. For this purpose, let $(f,g) \in \beta$. If $b \in F$, then there exists $a \in E$ with $h^{[n]}(a) = b$, and hence $\phi(f)(b) = h(f(a)) = h(g(a)) = \phi(g)(b)$. This shows that $(\phi(f),\phi(g)) \in \alpha$. Consequently, $\phi$ is uniformly continuous.

Note that the considerations above readily imply that the natural homomorphism from $\Clo (\mathbf{B})$ onto $\Clo (\mathbf{A})$ is uniformly continuous if $\mathbf{B} \in \HSPfin (\mathbf{A})$. Indeed, if $\mathbf{B}$ is a member of $\HSPfin (\mathbf{A})$, then there exist a finite set $S$ and a subalgebra $\mathbf{C} \leq \mathbf{A}^{S}$ such that $\mathbf{B} \in \HA (\mathbf{C})$, and hence the natural homomorphism from $\Clo (\mathbf{A})$ onto $\Clo (\mathbf{B})$ is the composite of the natural homomorphism from $\Clo (\mathbf{A})$ onto $\Clo (\mathbf{A}^{S})$, the one from $\Clo (\mathbf{A}^{S})$ onto $\Clo (\mathbf{C})$, and the one from $\Clo (\mathbf{C})$ onto $\Clo (\mathbf{B})$, and therefore it is uniformly continuous. We are going to use this conclusion subsequently.
	
Finally, we prove the general claim. Suppose that every finitely generated subalgebra of $\mathbf{B}$ is contained in $\HSPfin (\mathbf{A})$. Let $n \geq 1$. Consider the entourage $\alpha \defeq \{ (f,g) \in \clone{n}(\mathbf{B}) \mid f\vert_{F} = g\vert_{F} \}$ in $\clone{n}(\mathbf{B})$ for some finite subset $F \subseteq B^{n}$. Then $S \defeq \{ b_{j} \mid b \in F, \, 1 \leq j \leq n \}$ is finite. Denote by $\mathbf{C}$ the subalgebra of $\mathbf{B}$ generated by $S$. By assumption, $\mathbf{C}$ is contained in $\HSPfin (\mathbf{A})$. Now, $\beta \defeq \{ (f,g) \in \clone{n}(\mathbf{C}) \mid f\vert_{F} = g\vert_{F} \}$ is an entourage in $\clone{n}(\mathbf{C})$. Since the natural homomorphism $\psi \colon \Clo (\mathbf{A}) \to \Clo (\mathbf{C})$ is uniformly continuous, there exists a finite subset $E \subseteq A^{n}$ such that $(\psi \times \psi )(\gamma) \subseteq \beta$ with regard to the entourage $\gamma \defeq \{ (f,g) \in \clone{n}(\mathbf{A}) \mid f\vert_{E} = g\vert_{E} \}$ in $\clone{n}(\mathbf{A})$. We now claim that $(\phi \times \phi)(\gamma) \subseteq \alpha$. To see this, let $(f,g) \in \gamma$. If $b \in F$, then $\phi(f)(b) = \psi(f)(b) = \psi(g)(b) = \phi(g)(b)$. This proves that $(\phi(f),\phi(g)) \in \alpha$. Hence, $\phi$ is uniformly continuous, and we are done. \end{proof}

\begin{thm}\label{theorem:uniform.birkhoff} Let $\mathbf{A}$ and $\mathbf{B}$ be algebras of the same type. The following are equivalent. \begin{enumerate}
	\item[$(1)$] The algebra $\mathbf{B}$ is contained in $\HSP (\mathbf{A})$ and the natural homomorphism from $\Clo (\mathbf{A})$ onto $\Clo (\mathbf{B})$ is uniformly continuous.
	\item[$(2)$] Every finitely generated subalgebra of $\mathbf{B}$ belongs to $\HSPfin (\mathbf{A})$.
\end{enumerate} \end{thm}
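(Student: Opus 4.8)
The plan is to prove the equivalence by establishing both implications, noting that the direction $(2) \Rightarrow (1)$ is essentially already handled by the preceding lemmas. Indeed, assuming $(2)$, Lemma~\ref{lemma:varieties.closed.under.directed.colimits} immediately gives $\mathbf{B} \in \HSP(\mathbf{A})$ (since every finitely generated subalgebra of $\mathbf{B}$ lies in the variety $\HSP(\mathbf{A})$, because $\HSPfin(\mathbf{A}) \subseteq \HSP(\mathbf{A})$), and Lemma~\ref{lemma:uniform.continuity} directly yields uniform continuity of the natural homomorphism $\phi \colon \Clo(\mathbf{A}) \to \Clo(\mathbf{B})$. So the substance of the theorem lies in the reverse direction $(1) \Rightarrow (2)$, which is where I would concentrate.

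For $(1) \Rightarrow (2)$, I would fix a finitely generated subalgebra $\mathbf{C}$ of $\mathbf{B}$, say generated by a finite set $G = \{b_{1}, \ldots, b_{k}\} \subseteq B$, and aim to show $\mathbf{C} \in \HSPfin(\mathbf{A})$. The key idea is to exploit uniform continuity of $\phi$ at a single well-chosen entourage determined by $G$, which should force the natural homomorphism to factor through the clone of a \emph{finite} power of $\mathbf{A}$. Concretely, I would consider the tuple $g = (b_{1}, \ldots, b_{k}) \in B^{k}$ and the entourage $\alpha = \{(f,g) \in \clone{k}(\mathbf{B}) \mid f(g) = g(\text{correspondingly})\}$ separating clone elements by their value on this single point $g$; uniform continuity provides a finite subset $E \subseteq A^{k}$ such that agreement of two $k$-ary term functions of $\mathbf{A}$ on $E$ forces agreement of their $\phi$-images at $g$. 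Writing $E = \{a^{(1)}, \ldots, a^{(m)}\}$ with each $a^{(i)} \in A^{k}$, I would form the element $d \in (A^{m})^{k}$ whose $j$-th coordinate, for $1 \le j \le k$, is the tuple $(a^{(1)}_{j}, \ldots, a^{(m)}_{j}) \in A^{m}$, so that $d$ encodes exactly the test set $E$.

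The crucial step is then to define a map from the subalgebra $\mathbf{D}$ of $\mathbf{A}^{m}$ generated by $\{d_{1}, \ldots, d_{k}\}$ onto $\mathbf{C}$. Every element of $\mathbf{D}$ has the form $t^{\mathbf{A}^{m}}(d_{1}, \ldots, d_{k})$ for some $k$-ary term $t$, and I would send it to $t^{\mathbf{C}}(b_{1}, \ldots, b_{k}) \in C$. The content to verify is that this assignment is \emph{well-defined}: if two terms $s, t$ yield the same element of $\mathbf{D}$, i.e.\ $s^{\mathbf{A}^{m}}(d) = t^{\mathbf{A}^{m}}(d)$, then evaluating coordinatewise shows $\nu^{\mathbf{A}}_{k}(s)$ and $\nu^{\mathbf{A}}_{k}(t)$ agree on all of $E$, whence by the choice of $E$ (uniform continuity) their $\phi$-images agree at $g$, giving $s^{\mathbf{B}}(b_{1}, \ldots, b_{k}) = t^{\mathbf{B}}(b_{1}, \ldots, b_{k})$, i.e.\ the same element of $\mathbf{C}$. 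Once well-definedness is established, surjectivity is automatic (every generator $b_{j}$ is hit by $d_{j}$) and compatibility with operations is immediate, so the map is a surjective homomorphism $\mathbf{D} \to \mathbf{C}$. Since $\mathbf{D} \in \SA(\Pfin(\mathbf{A})) \subseteq \HSPfin(\mathbf{A})$, this gives $\mathbf{C} \in \HA(\mathbf{D}) \subseteq \HSPfin(\mathbf{A})$, as required.

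I expect the main obstacle to be the bookkeeping in the well-definedness argument, specifically matching up the coordinatewise structure of $d \in (A^{m})^{k}$ with the finite test set $E \subseteq A^{k}$ so that ``equality in $\mathbf{A}^{m}$ of the two term values'' translates cleanly into ``equality of the two $k$-ary term functions on $E$.'' This is purely notational but easy to get wrong; the conceptual heart -- that uniform continuity at the single point $g$ furnishes a finite witness set $E$ whose role is precisely to serve as the exponent set of the finite power $\mathbf{A}^{m}$ -- is what drives the proof, and the translation between the natural homomorphism language and term-evaluation language via the identity $\phi(\nu^{\mathbf{A}}_{k}(t)) = \nu^{\mathbf{B}}_{k}(t)$ should make the verification routine.
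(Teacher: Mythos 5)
Your proposal is correct and follows essentially the same route as the paper's proof: the forward direction via Lemma~\ref{lemma:varieties.closed.under.directed.colimits} and Lemma~\ref{lemma:uniform.continuity}, and for $(1)\Rightarrow(2)$ the same construction of a surjective homomorphism onto the finitely generated subalgebra from a subalgebra of a finite power of $\mathbf{A}$ indexed by the finite witness set supplied by uniform continuity at the single entourage of agreement on the generating tuple, with well-definedness being exactly the point where uniform continuity enters. Your enumeration $E=\{a^{(1)},\ldots,a^{(m)}\}$ with exponent $m$ versus the paper's exponent set $F$ itself, and your term-level phrasing via $\phi(\nu^{\mathbf{A}}_{k}(t))=\nu^{\mathbf{B}}_{k}(t)$ in place of the paper's explicit chain of equalities for the homomorphism property, are only notational differences.
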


\begin{proof} We know that (2) implies (1) by Lemma~\ref{lemma:varieties.closed.under.directed.colimits} and Lemma~\ref{lemma:uniform.continuity}. In order to prove the converse implication, assume that (1) holds. Let $\mathbf{C}$ be a subalgebra of $\mathbf{B}$ generated by a finite sequence of elements $b_{1},\ldots,b_{n} \in B$. Consider the entourage \begin{displaymath}
	\alpha \defeq \{ (f,g) \in \clone{n}(\mathbf{B}) \mid f(b_{1},\ldots,b_{n}) = g(b_{1},\ldots,b_{n}) \}
\end{displaymath} in $\clone{n}(\mathbf{A})$. Since the natural homomorphism $\phi \colon \Clo (\mathbf{A}) \to \Clo (\mathbf{B})$ is uniformly continuous, there exists a finite subset $F \subseteq A^{n}$ such that $(\phi \times \phi)(\beta) \subseteq \alpha$ for $\beta \defeq \{ (f,g) \in \clone{n}(\mathbf{A}) \mid f\vert_{F} = g\vert_{F} \}$. For each $j \in \{ 1,\ldots,n \}$, define an element $d_{j} \in A^{F}$ by setting $d_{j}(a) \defeq a_{j}$ for all $a \in F$. Denote by $\mathbf{D}$ the subalgebra of $\mathbf{A}^{F}$ generated by $\{ d_{1},\ldots,d_{n} \}$. We observe that \begin{displaymath}
	D = \left\{ f(d_{1},\ldots,d_{n}) \left| \, f \in \clone{n}\left(\mathbf{A}^{F}\right) \right\} = \{ (f(a))_{a \in F} \mid f \in \clone{n}(\mathbf{A}) \} . \right.
\end{displaymath} Utilizing this fact, we define a map $h \colon D \to C$ by setting \begin{displaymath}
	h((f(a))_{a \in F}) \defeq \phi (f)(b_{1},\ldots,b_{n}) \qquad (f \in \clone{n}(\mathbf{A})) .
\end{displaymath} Notice that $h$ is well defined: if $f,g \in \clone{n}(\mathbf{A})$ such that $(f(a))_{a \in F} = (g(a))_{a \in F}$, then $(f,g) \in \beta$ and hence $(\phi(f),\phi(g)) \in \alpha$, which means that $\phi(f)(b_{1},\ldots,b_{n}) = \phi(g)(b_{1},\ldots,b_{n})$. Evidently, $h$ is surjective as \begin{displaymath}
	C = \{ g(b_{1},\ldots,b_{n}) \mid g \in \clone{n}(\mathbf{B}) \} = \{ \phi(f)(b_{1},\ldots,b_{n}) \mid f \in \clone{n}(\mathbf{A}) \} = h(D) .
\end{displaymath} We are left to prove that $h \colon \mathbf{D} \to \mathbf{C}$ is a homomorphism. To this end, let $m \geq 0$, consider any $m$-ary operational symbol $\sigma$ of the common signature, and let $c_{1},\ldots,c_{m} \in D$. There exist $f_{1},\ldots,f_{m} \in \clone{n}(\mathbf{A})$ with $c_{\ell} = (f_{\ell}(a))_{a \in F}$ for each $\ell \in \{ 1,\ldots,m \}$. We conclude that \begin{align*}
	\sigma^{\mathbf{C}}(h(c_{1}),\ldots,h(c_{m})) \ &= \ \sigma^{\mathbf{C}}(h((f_{1}(a))_{a \in F}),\ldots,h((f_{m}(a))_{a \in F})) \\
	&= \ \sigma^{\mathbf{B}}(\phi (f_{1})(b_{1},\ldots,b_{n}),\ldots,\phi (f_{m})(b_{1},\ldots,b_{n}))) \\
	&= \ \phi\left(\sigma^{\mathbf{A}}\right)(\phi (f_{1})(b_{1},\ldots,b_{n}),\ldots,\phi (f_{m})(b_{1},\ldots,b_{n}))) \\
	&= \ \phi \left(\sigma^{\mathbf{A}}(f_{1},\ldots,f_{m})\right)(b_{1},\ldots,b_{n}) \\
	&= \ h\left(\psi\left(\sigma^{\mathbf{A}}(f_{1},\ldots,f_{m})\right)(d_{1},\ldots,d_{m})\right) \\
	&= \ h\left(\sigma^{\mathbf{A}^{F}}\left(\psi(f_{1})(d_{1},\ldots,d_{m}),\ldots,\psi(f_{m})(d_{1},\ldots,d_{m})\right)\right) \\
	&= \ h\left(\sigma^{\mathbf{D}}(c_{1},\ldots,c_{m})\right) .
\end{align*} Hence, $h \colon \mathbf{D} \to \mathbf{C}$ is a homomorphism, and thus $\mathbf{C} \in \HSPfin (\mathbf{A})$, which proves our claim. \end{proof}

We conclude this section with a slight reformulation of the theorem above. Towards this aim, we need to recall the concept of a colimit of a directed system of algebras. A \emph{directed system} of algebras is a family of algebras $(\mathbf{A}_{i})_{i \in I}$ of the same type indexed by a directed set $I$ such that $\mathbf{A}_{i} \leq \mathbf{A}_{j}$ for all $i,j \in I$ with $i \leq j$. If $(\mathbf{A}_{i})_{i \in I}$ is a directed system of algebras, then we define its \emph{colimit} to be the algebra $\mathbf{A}$ whose carrier set given by $A \defeq \bigcup\nolimits_{i \in I}A_{i}$ and whose operations are defined by \begin{displaymath}
	\sigma^{\mathbf{A}}(a_{1},\ldots,a_{n}) \defeq \sigma^{\mathbf{A}_{i}}(a_{1},\ldots,a_{n}) \qquad (a_{1},\ldots,a_{n} \in A_{i}, \, i \in I)
\end{displaymath} for any $n$-ary symbol of the common signature and $n \geq 0$. For a class $\mathcal{V}$ of algebras of the same type, we shall denote by $\DA (\mathcal{V})$ the class of all colimits of directed systems of members of $\mathcal{V}$. Note that any algebra arises as the colimit of the directed system of its finitely generated subalgebras. Hence, for an algebra $\mathbf{B}$ and a class $\mathcal{V}$ of algebras of the same type as $\mathbf{A}$ being closed with respect to subalgebras, it is true that $\mathbf{B} \in \DA (\mathcal{V})$ if and only if every finitely generated subalgebra of $\mathbf{B}$ belongs to $\mathcal{V}$. Furthermore, it is not difficult to see that $\DHSPfin (\mathcal{V})$ is the smallest class of algebras containing a given class $\mathcal{V}$ of algebras and being closed with respect to finite products, subalgebras, homomorphic images, and colimits of directed systems of algebras. These observations altogether readily provide us with the following reformulation of Theorem~\ref{theorem:uniform.birkhoff}.

\begin{cor}\label{corollary:uniform.birkhoff} Let $\mathbf{A}$ and $\mathbf{B}$ be algebras of the same type. The following are equivalent. \begin{enumerate}
	\item[$(1)$] The algebra $\mathbf{B}$ is contained in $\HSP (\mathbf{A})$ and the natural homomorphism from $\Clo (\mathbf{A})$ onto $\Clo (\mathbf{B})$ is uniformly continuous.
	\item[$(2)$] $\mathbf{B}$ is a member of $\DHSPfin (\mathbf{A})$.
\end{enumerate} \end{cor}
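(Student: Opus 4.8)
The plan is to deduce the corollary directly from Theorem~\ref{theorem:uniform.birkhoff}, since the two statements share condition $(1)$ verbatim and differ only in the phrasing of condition $(2)$. Thus all the analytic content is already in hand, and what remains is to reconcile the formulation ``every finitely generated subalgebra of $\mathbf{B}$ belongs to $\HSPfin(\mathbf{A})$'' with the formulation ``$\mathbf{B} \in \DHSPfin(\mathbf{A})$'' by means of the observations recorded immediately before the corollary.

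First I would set $\mathcal{V} \defeq \HSPfin(\mathbf{A})$ and note that this is the pseudo-variety generated by $\mathbf{A}$, hence closed under homomorphic images, subalgebras, and finite products. In particular $\mathcal{V}$ is closed with respect to subalgebras. Because $\mathcal{V}$ is already stable under $\HA$, $\SA$, and $\Pfin$, applying any of these operators to $\mathcal{V}$ yields nothing new, so that $\DHSPfin(\mathbf{A}) = \DA(\mathcal{V})$.

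Next I would invoke the criterion stated just before the corollary: for a class $\mathcal{V}$ closed under subalgebras, an algebra lies in $\DA(\mathcal{V})$ if and only if each of its finitely generated subalgebras lies in $\mathcal{V}$. This rests on the fact that every algebra is the colimit of the directed system of its finitely generated subalgebras, combined with the stability of $\mathcal{V}$ under $\SA$. Applying the criterion to $\mathcal{V} = \HSPfin(\mathbf{A})$ shows that condition $(2)$ of the corollary, namely $\mathbf{B} \in \DHSPfin(\mathbf{A})$, holds exactly when every finitely generated subalgebra of $\mathbf{B}$ belongs to $\HSPfin(\mathbf{A})$, which is precisely condition $(2)$ of Theorem~\ref{theorem:uniform.birkhoff}. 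The equivalence of $(1)$ and $(2)$ in the corollary then follows immediately from that theorem.

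The step that most wants care is the identity $\DHSPfin(\mathbf{A}) = \DA(\HSPfin(\mathbf{A}))$: one must not freely commute the operators, and the justification is only that $\HSPfin(\mathbf{A})$ is already idempotent under each of $\HA$, $\SA$, and $\Pfin$, being the associated pseudo-variety. Beyond this bit of bookkeeping I expect no genuine obstacle, since the substantive construction---producing, for a finitely generated subalgebra of $\mathbf{B}$, a finite power of $\mathbf{A}$ together with a subalgebra and a homomorphic image witnessing membership in $\HSPfin(\mathbf{A})$---has already been carried out in the proof of Theorem~\ref{theorem:uniform.birkhoff}.
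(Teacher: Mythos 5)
Your proposal is correct and is essentially the paper's own argument: the paper likewise obtains the corollary from Theorem~\ref{theorem:uniform.birkhoff} together with the observations recorded just before it, namely that every algebra is the directed colimit of its finitely generated subalgebras and that, for a class $\mathcal{V}$ closed under subalgebras (such as the pseudo-variety $\HSPfin(\mathbf{A})$), membership in $\DA(\mathcal{V})$ is equivalent to all finitely generated subalgebras lying in $\mathcal{V}$. The one step you flag as delicate is in fact immediate, since $\DHSPfin(\mathbf{A})$ denotes the composite $\DA(\HA(\SA(\Pfin(\mathbf{A})))) = \DA(\HSPfin(\mathbf{A}))$ by definition, so no commuting of operators (and hence no idempotence argument) is needed there.
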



\section{Balanced group actions}\label{section:balanced.group.actions}

In this section we study \emph{balanced} (i.e., uniformly equicontinuous) group actions on uniform spaces, which is supposed to serve as preparation of Section~\ref{section:almost.locally.finite.algebras}, but which we also regard as being of independent interest. To this end, consider an arbitrary action of a group $G$ on some uniform space $X$. As would seem natural, an entourage $\alpha$ of $X$ is called \emph{$G$-invariant} if $(gx,gy) \in \alpha$ for all $(x,y) \in \alpha$ and $g \in G$. We say that the action of $G$ on $X$ is \emph{balanced} or that $X$ is \emph{$G$-balanced} if the $G$-invariant entourages of $X$ constitute a uniformity base for $X$, which just means that the set $\{ x \mapsto gx \mid g \in G \} \subseteq X^{X}$ is uniformly equicontinuous. Suppose that $X$ is $G$-balanced. We endow the quotient set $X/G \defeq \{ Gx \mid x \in X \}$ with a suitable uniform structure: for any entourage $\alpha$ of $X$, let \begin{align*}
	\alpha /G &\defeq \{ (P,Q) \in (X/G) \times (X/G) \mid (P \times Q) \cap \alpha \ne \emptyset \}  .
\end{align*} It is easy to check that $\{ \alpha /G \mid \alpha \text{ entourage of } X \}$ constitutes a uniformity base on $X/G$, and we equip $X/G$ with the generated uniformity. The following observation clarifies the relation between the introduced uniformity and the usual quotient topology. 

\begin{prop} Consider a balanced action of a group $G$ on a uniform space $X$. Then the map $\pi_{G} \colon X \to X/G, \, x \mapsto Gx$ is continuous and open. In particular, the topology induced by the uniformity on $X/G$ is just the quotient topology generated by $\pi_{G}$. \end{prop}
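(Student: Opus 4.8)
The plan is to establish the three claims in sequence: that $\pi_G$ is uniformly continuous (hence continuous), that $\pi_G$ is open, and then to deduce the statement about topologies as a formal corollary.

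First I would check uniform continuity directly from the definitions, without using balancedness at all. For an arbitrary entourage $\alpha$ of $X$ I would verify that $(\pi_G \times \pi_G)(\alpha) \subseteq \alpha/G$: if $(x,y) \in \alpha$, then $(x,y) \in (Gx \times Gy) \cap \alpha$, so $(Gx \times Gy) \cap \alpha \ne \emptyset$ and thus $(\pi_G(x),\pi_G(y)) = (Gx,Gy) \in \alpha/G$. Since the sets $\alpha/G$ form a uniformity base of $X/G$, this immediately yields uniform continuity and in particular continuity.

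The heart of the argument, and the step where I expect the real difficulty, is openness; this is also the only place the balanced hypothesis is needed. Given an open set $U \subseteq X$ and a point $P \in \pi_G(U)$, I would fix a representative $x \in U$ with $Gx = P$ and, using that $U$ is open together with the fact that the $G$-invariant entourages form a uniformity base, choose a \emph{$G$-invariant} entourage $\alpha$ with $B_\alpha(x) \subseteq U$. The crucial computation is then
\[
	B_{\alpha/G}(Gx) = \pi_G(B_\alpha(x)).
\]
The inclusion $\supseteq$ is immediate from the definition of $\alpha/G$. For $\subseteq$, if $(Gx,Gy) \in \alpha/G$, then $(gx,hy) \in \alpha$ for some $g,h \in G$, and $G$-invariance of $\alpha$ gives $(x,g^{-1}hy) \in \alpha$, so that $g^{-1}hy \in B_\alpha(x)$ and hence $Gy = G(g^{-1}hy) \in \pi_G(B_\alpha(x))$. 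From this identity I obtain $B_{\alpha/G}(Gx) = \pi_G(B_\alpha(x)) \subseteq \pi_G(U)$, so $\pi_G(U)$ contains a basic uniform neighbourhood of each of its points and is therefore open.

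Finally I would invoke a standard fact: a continuous, open surjection is a quotient map. Since $\pi_G$ is surjective by construction and was just shown to be continuous and open, a subset $V \subseteq X/G$ is open for the uniform topology if and only if $\pi_G^{-1}(V)$ is open in $X$, which is exactly the defining property of the quotient topology induced by $\pi_G$; this gives the ``in particular'' clause. The only genuine obstacle is the openness step --- concretely, recognizing that one must pass to a $G$-invariant $\alpha$ so that the orbit-translations collapse $B_{\alpha/G}(Gx)$ onto $\pi_G(B_\alpha(x))$; everything else is direct manipulation of the definitions of the quotient uniformity and its induced topology.
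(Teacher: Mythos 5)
Your proof is correct and follows essentially the same route as the paper's: uniform continuity falls out of the definition of $\alpha/G$ without balancedness, and openness is obtained by passing to a $G$-invariant entourage $\alpha$ with $B_{\alpha}(x) \subseteq U$ and using invariance to translate along orbits. Your packaging via the identity $B_{\alpha/G}(Gx) = \pi_{G}(B_{\alpha}(x))$ is a slight streamlining (it even avoids the paper's use of a \emph{symmetric} invariant entourage), and your explicit derivation of the ``in particular'' clause from the fact that a continuous open surjection is a quotient map is exactly the standard argument the paper leaves implicit.
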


\begin{proof} Evidently, $\pi_{G}$ is uniformly continuous, because $\alpha \subseteq (\pi_{G} \times \pi_{G})^{-1}(\alpha/G)$ for every entourage $\alpha$ of $X$. Hence, we are left to prove that $\pi_{G}$ is open. So, consider any open subset $U \subseteq X$. We argue that $\pi_{G}(U)$ is open in $X/G$. Let $z \in X$ with $Gz = \pi_{G}(z) \in \pi_{G}(U)$. Then $z = gx$ for some $x \in U$ and $g \in G$. Since $X$ is $G$-balanced and $U$ is open in $X$, there exists a $G$-invariant symmetric entourage $\alpha$ in $X$ such that $B_{\alpha}(x) \subseteq U$. Let $y \in X$ with $\pi_{G}(y) \in B_{\alpha/G}(\pi_{G}(z))$. Due to $\alpha$ being $G$-invariant, it follows that \begin{displaymath}
		y \in B_{\alpha}(Gz) = B_{\alpha}(Gx) = GB_{\alpha}(x) \subseteq GU
\end{displaymath} and hence $\pi_{G}(y) \in \pi_{G}(U)$. This shows that $B_{\alpha/G}(\pi_{G}(z)) \subseteq \pi_{G}(U)$. Thus, $\pi_{G}$ is open. \end{proof}

In the following we shall be interested in conditions asserting compactness of the considered quotient spaces. This is mainly due to the subsequent result, which will be of particular relevance to our considerations in Section~\ref{section:almost.locally.finite.algebras}.

\begin{lem}\label{lemma:automatic.uniform.continuity} Let $G$ be a group, let $X$ and $Y$ be $G$-balanced uniform spaces, and let $f \colon X \to Y$ be a continuous $G$-equivariant map. If $X/G$ is compact, then $f$ is uniformly continuous. \end{lem}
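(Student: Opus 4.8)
The plan is to mimic the classical argument that a continuous map out of a \emph{compact} uniform space is uniformly continuous, but to run the covering/compactness step on the quotient $X/G$ rather than on $X$, using $G$-equivariance of $f$ together with $G$-invariance of entourages to descend to $X/G$ and then re-ascend. Since $Y$ is $G$-balanced, every entourage of $Y$ contains a $G$-invariant one, so it suffices to exhibit, for each $G$-invariant entourage $\alpha$ of $Y$, an entourage $\gamma$ of $X$ with $(f \times f)(\gamma) \subseteq \alpha$. First I would fix such an $\alpha$ and choose a $G$-invariant \emph{symmetric} entourage $\alpha_{0}$ of $Y$ with $\alpha_{0} \circ \alpha_{0} \subseteq \alpha$; this is possible because the $G$-invariant entourages form a uniformity base and $\alpha_{0} \cap \alpha_{0}^{-1}$ is again $G$-invariant.

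Next, for each $x \in X$ I would combine continuity of $f$ at $x$ with $G$-balancedness of $X$ to obtain a $G$-invariant entourage $\beta_{x}$ of $X$ with $f(B_{\beta_{x}}(x)) \subseteq B_{\alpha_{0}}(f(x))$, and then pick a $G$-invariant $\gamma_{x}$ with $\gamma_{x} \circ \gamma_{x} \subseteq \beta_{x}$. The sets $\inte B_{\gamma_{x}}(x)$ form an open cover of $X$ (using the standard fact that $x \in \inte B_{\gamma_{x}}(x)$). Since $\pi_{G}$ is open by the preceding proposition, the images $\pi_{G}(\inte B_{\gamma_{x}}(x))$ form an open cover of $X/G$. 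Compactness of $X/G$ then yields finitely many indices $x_{1}, \ldots, x_{k}$ whose associated images still cover $X/G$, and I would set $\gamma \defeq \bigcap_{i=1}^{k} \gamma_{x_{i}}$, a $G$-invariant entourage of $X$.

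The heart of the proof is the verification $(f \times f)(\gamma) \subseteq \alpha$, where equivariance and invariance interlock. Given $(x,y) \in \gamma$, the class $Gx$ lies in some $\pi_{G}(\inte B_{\gamma_{x_{i}}}(x_{i}))$, so a suitable translate $gx$ satisfies $(x_{i}, gx) \in \gamma_{x_{i}}$. Because $\gamma_{x_{i}}$ is $G$-invariant, $(x,y) \in \gamma \subseteq \gamma_{x_{i}}$ gives $(gx, gy) \in \gamma_{x_{i}}$, whence $(x_{i}, gy) \in \gamma_{x_{i}} \circ \gamma_{x_{i}} \subseteq \beta_{x_{i}}$ and also $gx \in B_{\beta_{x_{i}}}(x_{i})$. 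Applying the defining property of $\beta_{x_{i}}$ and then $G$-equivariance $f(gx) = gf(x)$, $f(gy) = gf(y)$, the symmetry and composition bound on $\alpha_{0}$ yield $(gf(x), gf(y)) \in \alpha_{0} \circ \alpha_{0} \subseteq \alpha$; finally $G$-invariance of $\alpha$ cancels $g$ to give $(f(x), f(y)) \in \alpha$.

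I expect the principal difficulty to be organizational rather than conceptual: one must be disciplined in carrying $G$-invariance through \emph{every} choice of entourage ($\alpha_{0}$, $\beta_{x}$, $\gamma_{x}$), since it is precisely this that allows the pointwise continuity data to descend along $\pi_{G}$ to the compact quotient, and one must correctly thread the auxiliary group element $g$ — supplied by membership in the finite subcover — through both the equivariance of $f$ and the invariance of $\alpha$ in the concluding chain of compositions. The remaining ingredient, that $x \in \inte B_{\gamma_{x}}(x)$ so as to produce a genuine open cover before invoking openness of $\pi_{G}$, is routine uniform-space bookkeeping.
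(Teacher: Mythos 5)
Your proposal is correct and takes essentially the same route as the paper's proof: package pointwise continuity into $G$-invariant entourage balls, push the resulting open cover down to $X/G$ via openness of $\pi_{G}$, extract a finite subcover, intersect the finitely many invariant entourages, and transport the estimate back using invariance and equivariance. The only cosmetic difference is at the end, where you require $\alpha$ to be $G$-invariant so as to cancel the group element $g$ from $(gf(x),gf(y)) \in \alpha$, whereas the paper instead translates the ball containment $f(B_{\gamma_{z} \circ \gamma_{z}}(z)) \subseteq B_{\beta}(f(z))$ along $g$ and concludes with a merely symmetric $\alpha$ -- both are valid since the $G$-invariant entourages form a base.
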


\begin{proof} Suppose $\alpha$ to be a symmetric entourage of $Y$. Then there exists a $G$-invariant symmetric entourage $\beta$ of $Y$ such that $\beta \circ \beta \subseteq \alpha$. As $f$ is continuous, \begin{displaymath}
		\mathcal{U} \defeq \{ \inte (B_{\gamma}(x)) \mid x \in X, \, \gamma \text{ $G$-invariant sym.~entourage of } X, \, f(B_{\gamma \circ \gamma}(x)) \subseteq B_{\beta}(f(x)) \}
\end{displaymath} covers $X$. Since the quotient map $\pi \colon X \to X/G$ is open, $\pi (\mathcal{U})$ is an open cover of $X/G$. By compactness of $X/G$, there exists a finite subset $F \subseteq X$ as well as a family $(\gamma_{z} \mid z \in F)$ of $G$-invariant symmetric entourages of $X$ such that $\{ \pi (B_{\gamma_{z}}(z)) \mid z \in F \}$ covers $X/G$ and $f(B_{\gamma_{z} \circ \gamma_{z}}(z)) \subseteq B_{\beta}(f(z))$ for each $z \in F$. Hence, \begin{displaymath}
	X = \bigcup \{ g(\inte (B_{\gamma_{z}}(z))) \mid g \in G, \, z \in F \} = \bigcup \{ \inte (B_{\gamma_{z}}(gz)) \mid g \in G, \, z \in F \} .
\end{displaymath} Consider the symmetric entourage $\gamma \defeq \bigcap \{ \gamma_{z} \mid z \in F \}$. We claim that $(f \times f)(\gamma) \subseteq \alpha$. To see this, let $(x,y) \in \gamma$. Then there is $z \in F$ as well as $g \in G$ such that $(x,gz) \in \gamma_{z}$. Thus, $(y,gz) \in \gamma_{z} \circ \gamma_{z}$. We observe that \begin{align*}
f(B_{\gamma_{z} \circ \gamma_{z}}(gz)) = f(g(B_{\gamma_{z} \circ \gamma_{z}}(z))) &= g(f(B_{\gamma_{z} \circ \gamma_{z}}(z))) \\
&\subseteq g(B_{\beta}(f(z))) = B_{\beta}(gf(z)) = B_{\beta}(f(gz)) .
\end{align*} Consequently, $(f(x),f(gz)) \in \beta$ and $(f(y),f(gz)) \in \beta$. It follows that $(f(x),f(y)) \in \alpha$. This proves that $f$ is uniformly continuous. \end{proof}

It is well known that any uniform space is compact if and only if it is both precompact and complete. Hence, in order to establish compactness of a certain uniform space, it seems natural to wonder about completeness first. This is the objective of our next result, which will turn out useful in the proof of Proposition~\ref{proposition:countable.almost.locally.finite}. As we shall see in Example~\ref{example:first}, it is not possible remove the countability assumption from the subsequent lemma.

\begin{lem}\label{lemma:complete.quotient} Consider a balanced action of a group $G$ on a complete uniform space $X$. If $X$ admits a countable uniformity base, then $X/G$ is complete. \end{lem}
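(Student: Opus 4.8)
The plan is to combine the countable uniformity base with the $G$-invariance supplied by balancedness in order to lift Cauchy sequences from $X/G$ back to $X$, and then invoke completeness of $X$. The point where countability is indispensable is the reduction to \emph{sequences}, which is exactly what must fail in general (cf.\ Example~\ref{example:first}).

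First I would arrange a convenient base. Because $X$ is $G$-balanced, every entourage of $X$ contains a $G$-invariant one, so each member of the given countable base may be shrunk to a $G$-invariant, symmetric entourage; this again yields a countable base, since each new member lies inside an old one. Intersecting initial segments makes the base decreasing, and the standard inductive refinement using the composition axiom — choosing at each step a $G$-invariant symmetric refinement, which exists by balancedness — produces a countable base $(\alpha_{n})_{n \geq 0}$ of $G$-invariant symmetric entourages with $\alpha_{n} \supseteq \alpha_{n+1}$ and $\alpha_{n+1} \circ \alpha_{n+1} \subseteq \alpha_{n}$. (Intersection, inversion, and passage to refinements all preserve $G$-invariance, symmetry, and the base property.) Then $\{ \alpha_{n}/G \mid n \geq 0 \}$ is a countable base for the uniformity of $X/G$, so $X/G$ is pseudometrizable and it suffices to prove that every Cauchy sequence in $X/G$ converges.

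The heart of the argument is a one-step lifting lemma: if $(P,Q) \in \alpha_{j}/G$ and $x \in P$ is given, then there exists $y \in Q$ with $(x,y) \in \alpha_{j}$. Indeed, $(P \times Q) \cap \alpha_{j} \neq \emptyset$ furnishes $a \in P$ and $b \in Q$ with $(a,b) \in \alpha_{j}$; since $P = Gx$ we may write $a = gx$, and applying $G$-invariance of $\alpha_{j}$ with $g^{-1}$ gives $(x, g^{-1}b) \in \alpha_{j}$ with $g^{-1}b \in Gb = Q$. Now let $(Q_{k})$ be a Cauchy sequence in $X/G$; passing to a subsequence I may assume $(Q_{j}, Q_{j+1}) \in \alpha_{j}/G$ for all $j$. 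Starting from an arbitrary $x_{0} \in Q_{0}$ and iterating the lifting lemma produces $x_{j} \in Q_{j}$ with $(x_{j}, x_{j+1}) \in \alpha_{j}$ for every $j$; by the rapidly decreasing property of $(\alpha_{n})$ the resulting sequence $(x_{j})$ is Cauchy in $X$. This lifting step is where balancedness is genuinely used, and I expect it to be the main obstacle to get exactly right.

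Finally, since $X$ is complete, the Cauchy sequence $(x_{j})$ converges to some $x \in X$. As $\pi_{G}$ is uniformly continuous (directly, $(u,v) \in \alpha_{n}$ implies $(Gu, Gv) \in \alpha_{n}/G$), it follows that $Q_{j} = \pi_{G}(x_{j}) \to Gx = \pi_{G}(x)$ in $X/G$. A Cauchy sequence possessing a convergent subsequence converges to the same limit, so the original sequence $(Q_{k})$ converges to $Gx$. Hence every Cauchy sequence in the pseudometrizable space $X/G$ converges, and therefore $X/G$ is complete.
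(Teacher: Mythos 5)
Your proof is correct and follows essentially the same route as the paper: both arrange a rapidly decreasing countable base $(\alpha_{n})$ of $G$-invariant symmetric entourages and use $G$-invariance exactly as in your one-step lifting lemma to pull a Cauchy datum on $X/G$ up to a Cauchy sequence in $X$, whose limit is pushed back down via $\pi_{G}$. The only cosmetic difference is that you first reduce to sequences abstractly (pseudometrizability from the countable base, plus the subsequence trick), whereas the paper works with an arbitrary Cauchy filter directly, extracting a sequence of approximate centres $y_{n}$ from the filter and verifying convergence of the filter to $\pi_{G}(x)$ by an explicit entourage computation.
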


\begin{proof} Notice that our assumptions imply the existence of a sequence $(\alpha_{n})_{n \in \mathbb{N}}$ of $G$-invariant symmetric entourages of $X$ such that $\{ \alpha_{n} \mid n \in \mathbb{N} \}$ constitutes a uniformity base for $X$ and that ${\alpha_{n+1}} \circ {\alpha_{n+1}} \subseteq {\alpha_{n}}$ for every $n \in \mathbb{N}$. Let $\mathcal{F}$ be a Cauchy filter on $X/G$. For every $n \in \mathbb{N}$, pick an element $y_{n} \in X/G$ such that $F \subseteq B_{\alpha_{n+1}/G}(y_{n})$ for some $F \in \mathcal{F}$. Recursively, choose a sequence $(x_{n})_{n \in \mathbb{N}} \in X^{\mathbb{N}}$ with $\pi_{G}(x_{n}) = y_{n}$ and $(x_{n},x_{n+1}) \in \alpha_{n+1}$ for every $n \in \mathbb{N}$. Hence, \begin{displaymath}
	(x_{n},x_{n+k}) \in {\alpha_{n+1}} \circ \ldots \circ {\alpha_{n+k-1}} \circ {\alpha_{n+k}} \subseteq {\alpha_{n+1}} \circ \ldots \circ {\alpha_{n+k-3}} \circ {\alpha_{n+k-2}} \circ {\alpha_{n+k-2}} \subseteq \ldots \subseteq \alpha_{n}
\end{displaymath} for all $m,k \in \mathbb{N}$. It follows that \begin{displaymath}
	\mathcal{H} \defeq \{ H \subseteq X \mid \exists n \in \mathbb{N} \colon \, \{ x_{m} \mid m \geq n \} \subseteq H \}
\end{displaymath} is a Cauchy filter on $X$. As $X$ is complete, $\mathcal{H}$ converges to some point $x \in X$. We argue that $\mathcal{F}$ converges to $\pi_{G}(x)$ in $X/G$. Let $U$ be a neighborhood of $\pi_{G}(x)$ in $X/G$. Then $\pi_{G}^{-1}(U)$ is a neighborhood of $x$ in $X$, and thus there exists some $n \in \mathbb{N}$ with $B_{\alpha_{n}}(x) \subseteq \pi_{G}^{-1}(U)$. As $(x,x_{n+2}) \in \alpha_{n+1}$ and ${\alpha_{n+1}} \circ {\alpha_{n+1}} \subseteq {\alpha_{n}}$, it follows that $B_{\alpha_{n+1}}(x_{n+2}) \subseteq B_{\alpha_{n}}(x) \subseteq \pi_{G}^{-1}(U)$ and therefore $B_{\alpha_{n+1}/G}(y_{n+2}) \subseteq U$. By choice of $y_{n+2}$, there exists some $F \in \mathcal{F}$ such that \begin{displaymath}
	F \subseteq B_{\alpha_{n+3}/G}(y_{n+2}) \subseteq B_{\alpha_{n+1}/G}(y_{n+2}) \subseteq U .
\end{displaymath} This proves that $\mathcal{F}$ converges in $X/G$. \end{proof}

Before we continue with some more general remarks on balanced group actions, let us discuss a particular class of examples. Of course, any isometric group action on a metric space constitutes a balanced action with respect to the induced uniformity. However, we shall be interested in another source of useful examples. Consider any set $Y$ and a group $G$ acting on another set $X$. Then the canonical action of $G$ on $X^{Y}$ given by \begin{displaymath}
	(gf)(y) \defeq gf(y) \qquad \left( g \in G, \, f \in X^{Y}, \, y \in Y\right)
\end{displaymath} is balanced with respect to the product uniformity $X^{Y}$ induced by the discrete space $X$, since the $G$-invariant entourages of the form \begin{displaymath}
	\left. \ker (\pr_{F}) = \left\{ (f,g) \in X^{Y} \times X^{Y} \, \right| f|_{F} = g|_{F} \right\} \qquad (F \subseteq Y \text{ finite})
\end{displaymath} constitute a uniformity base of $X^{Y}$. Finding tractable conditions implying compactness of $X^{Y}/G$ will be of particular relevance to our considerations in Section~\ref{section:almost.locally.finite.algebras}. Our first observation in this regard concerns the case where $Y$ is countable and is a mere consequence of Lemma~\ref{lemma:complete.quotient}.

\begin{prop}\label{proposition:countable.oligomorphicity} Let $G$ be a group acting on a set $X$. Let $Y$ be a countably infinite set and let $Z$ be a $G$-invariant closed subset of $X^{Y}$. Then $Z/G$ is compact if and only if $\pr_{F}(Z)/G$ is finite for every finite subset $F \subseteq Y$. \end{prop}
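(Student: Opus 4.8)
The plan is to deduce compactness of $Z/G$ from its precompactness, which is made possible by first observing that $Z/G$ is \emph{automatically} complete. Since $X$ is discrete it is complete, hence so is the product $X^{Y}$; as $Z$ is closed in $X^{Y}$, it inherits completeness. Because $Y$ is countable, the entourages $\alpha_{F} \defeq \{ (f,g) \in Z \times Z \mid f|_{F} = g|_{F} \}$ with $F \subseteq Y$ finite form a countable uniformity base of $Z$ (they are the restrictions to $Z$ of the base entourages $\ker (\pr_{F})$ of $X^{Y}$), and they are $G$-invariant, so the action of $G$ on $Z$ is balanced. Lemma~\ref{lemma:complete.quotient} then yields that $Z/G$ is complete. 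Consequently $Z/G$ is compact if and only if it is precompact, and the task reduces to characterizing precompactness.

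The key observation is that the sets $\alpha_{F}/G$ ($F \subseteq Y$ finite) form a base of the uniformity on $Z/G$, and that their balls are exactly the fibres of the natural projection onto $\pr_{F}(Z)/G$. Indeed, unwinding the definition, $(Gf, Gg) \in \alpha_{F}/G$ holds precisely when $\pr_{F}(af) = \pr_{F}(bg)$ for some $a,b \in G$; using that $\pr_{F}$ is $G$-equivariant and that distinct $G$-orbits are disjoint, this is equivalent to $G\pr_{F}(f) = G\pr_{F}(g)$ in $\pr_{F}(Z)/G$. Hence $B_{\alpha_{F}/G}(Gf)$ is exactly the fibre over $G\pr_{F}(f)$ of the surjection $\overline{\pr_{F}} \colon Z/G \to \pr_{F}(Z)/G, \ Gf \mapsto G\pr_{F}(f)$.

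With this in hand both implications are immediate. If every $\pr_{F}(Z)/G$ is finite, then for each finite $F$ choosing one preimage $f_{i} \in Z$ for each of the finitely many orbits in $\pr_{F}(Z)/G$ produces finitely many balls $B_{\alpha_{F}/G}(Gf_{i})$ covering $Z/G$, so $Z/G$ is precompact and therefore compact. Conversely, if $Z/G$ is precompact, then for each finite $F$ finitely many $\alpha_{F}/G$-balls cover $Z/G$; since these balls are the fibres of the surjection $\overline{\pr_{F}}$, its target $\pr_{F}(Z)/G$ must be finite. The only genuine work is the reduction to precompactness via completeness, and this is where the countability of $Y$ is essentially used, through Lemma~\ref{lemma:complete.quotient}; the precompactness equivalence is then the formal fibre computation above. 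I expect the subtlest point to be verifying that $\{ \alpha_{F}/G \}$ is indeed a base for $Z/G$ and correctly identifying each ball $B_{\alpha_{F}/G}(Gf)$ with an orbit-fibre, since everything else follows mechanically.
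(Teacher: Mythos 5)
Your proof is correct and follows essentially the same route as the paper: completeness of $Z/G$ via Lemma~\ref{lemma:complete.quotient} applied to the closed (hence complete) subspace $Z \subseteq X^{Y}$ with its countable base of $G$-invariant entourages, and precompactness obtained by translating finiteness of $\pr_{F}(Z)/G$ into a finite cover by $\alpha_{F}/G$-balls. The only cosmetic difference is in the forward direction, where the paper pushes compactness of $Z/G$ through the induced continuous map $Z/G \to X^{F}/G$ (a discrete space), while you deduce finiteness of $\pr_{F}(Z)/G$ directly from precompactness together with your correct identification of each $\alpha_{F}/G$-ball with a fibre of $\overline{\pr_{F}}$ --- an equally valid, slightly more self-contained variant.
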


\begin{proof} ($\Longrightarrow$) Consider any finite subset $F \subseteq Y$. Clearly, $\pr_{F} \colon Z \to X^{F} , \, f \mapsto f|_{F}$ is continuous with respect to the discrete topology on $X$. Moreover, $\pr_{F}$ is equivariant with respect to the obvious $G$-actions on $Z$ and $X$. Hence, it induces a continuous map \begin{displaymath}
		p \colon Z/G \to X^{F}/G, \quad G f \mapsto G\pr_{F}(f) .
\end{displaymath} Since $Z/G$ is compact, it follows that $\pr_{F}(Z)/G = p(Z/G)$ is compact and hence finite.
	
($\Longleftarrow$) Since $Z$ is closed in the complete space $X^{Y}$, it follows that $Z$ is complete as well. As $Y$ is countable, $Z$ admits a countable uniformity base. Therefore, $Z/G$ is complete due to Lemma~\ref{lemma:complete.quotient}. Hence, it suffices to show that $Z/G$ is precompact. For this purpose, consider the entourage $\alpha \defeq \{ (f,g) \in Z \mid f|_{F} = g|_{F} \}$ in $Z$ with regard to some finite subset $F \subseteq Y$. Since $\pr_{F}(Z)/G$ is finite by assumption, there is a finite subset $E \subseteq Z$ so that $\pr_{F}(Z) = G\pr_{F}(E)$. Of course, this just means that $Z = B_{\alpha}(GE)$, or equivalently, $Z/G = B_{\alpha/G}(E/G)$. \end{proof}

Unfortunately, the previous result cannot be extended to the uncountable setting, as the subsequent example reveals.

\begin{exmpl}\label{example:first} Let $X$ and $Y$ be any two infinite sets such that $\vert X \vert < \vert Y \vert$ and let $G \defeq \Sym (X)$. Then $X^{Y}/G$ is not compact, because \begin{displaymath}
	\left. \left. \left\{ \left\{ f \in X^{Y} \, \right| f(x) = f(y) \right\} \, \right| x,y \in Y, \, x \ne y \right\}
\end{displaymath} is a cover of $X^{Y}$ by $G$-invariant open subsets, which does not admit a finite subcover. It follows that $X^{Y}/H$ is not compact for any subgroup $H$ of $G$. Moreover, $X^{Y}/G$ is not even complete: considering the filter \begin{displaymath}
	\left. \left. \mathcal{F} \defeq \left\{ F \subseteq X^{Y} \right| \, \exists E \subseteq Y \text{ finite}\colon \left\{ f \in X^{Y} \right| \, f\vert_{E} \text{ injective} \right\} \subseteq F \right\}
\end{displaymath} on $X^{Y}$, it is easy to check that the image filter $\pi_{G}(\mathcal{F})$ is a Cauchy filter on $X^{Y}/G$, but that it does not converge in $X^{Y}/G$. \end{exmpl}

\begin{prop}\label{proposition:almost.full.symmetric.group} Let $X$ and $Y$ be infinite sets such that $\vert Y \vert \leq \vert X \vert$. If $G$ is a dense subgroup of $\Sym (X)$ (with regard to the topology of point-wise convergence), then $X^{Y}/G$ is compact. \end{prop}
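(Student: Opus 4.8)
The plan is to identify the Hausdorff quotient of $X^{Y}/G$ with the space of all equivalence relations on $Y$, equipped with the topology of point-wise convergence, and then to exploit compactness of the latter. I would first recall from the discussion preceding the statement that the action of $G$ on $X^{Y}$ is balanced and that the entourages $\ker (\pr_{F})$ with $F \subseteq Y$ finite form a uniformity base of $X^{Y}$, so that the sets $\alpha_{F}/G$ with $\alpha_{F} \defeq \ker (\pr_{F})$ constitute a uniformity base of $X^{Y}/G$. For $f \in X^{Y}$ set $\ker f \defeq \{ (y,y') \in Y \times Y \mid f(y) = f(y') \}$, an equivalence relation on $Y$, and introduce the map $\Psi \colon X^{Y} \to \operatorname{Eq}(Y)$, $f \mapsto \ker f$, where $\operatorname{Eq}(Y)$ denotes the set of all equivalence relations on $Y$, viewed as a subspace of $\{ 0,1 \}^{Y \times Y}$ with $\{ 0,1 \}$ discrete. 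Since every $g \in G$ is a bijection of $X$, one has $\ker (gf) = \ker f$, so $\Psi$ is $G$-invariant and descends to a map $\overline{\Psi} \colon X^{Y}/G \to \operatorname{Eq}(Y)$, $Gf \mapsto \ker f$.

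The crux of the argument, and the step I expect to be the main obstacle, is to show that $\overline{\Psi}$ transports the natural uniformity base of $\operatorname{Eq}(Y)$ back onto the base $\{ \alpha_{F}/G \}$. For finite $F \subseteq Y$ one has $(Gf,Gg) \in \alpha_{F}/G$ precisely when there is $h \in G$ with $(hf)|_{F} = g|_{F}$, i.e.\ when $f|_{F}$ and $g|_{F}$ lie in a common $G$-orbit of $X^{F}$, and the key point to pin down is that this holds if and only if $\ker f$ and $\ker g$ agree on $F \times F$. For the nontrivial direction I would argue: if $\ker (f|_{F}) = \ker (g|_{F})$, then $f(y) \mapsto g(y)$ is a well-defined injection between the finite subsets $f(F)$ and $g(F)$ of $X$; since $X$ is infinite, the complements $X \setminus f(F)$ and $X \setminus g(F)$ have equal cardinality $\vert X \vert$, so this partial injection extends to some $\sigma \in \Sym (X)$, and density of $G$ yields $h \in G$ agreeing with $\sigma$ on the finite set $f(F)$, whence $(hf)|_{F} = g|_{F}$. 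The converse is immediate, as permutations are injective. This is exactly where the density of $G$ and the infinitude of $X$ enter.

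It follows that $(\overline{\Psi} \times \overline{\Psi})^{-1}(\beta_{F}) = \alpha_{F}/G$, where $\beta_{F} \defeq \{ (E,E') \mid E \cap (F \times F) = E' \cap (F \times F) \}$, and the $\beta_{F}$ form a base of the subspace uniformity on $\operatorname{Eq}(Y)$. Hence $\overline{\Psi}$ is uniformly continuous; I would then check that it is open and satisfies $\overline{\Psi}^{-1}(\overline{\Psi}(U)) = U$ for every open $U$, using that every open subset of a uniform space is saturated for the relation $\bigcap_{F} \alpha_{F}/G$ whose classes are exactly the fibres of $\overline{\Psi}$. Surjectivity of $\overline{\Psi}$ is where the hypothesis $\vert Y \vert \leq \vert X \vert$ is used: any equivalence relation on $Y$ has at most $\vert Y \vert \leq \vert X \vert$ classes, so it equals $\ker f$ for a suitable $f$ obtained from an injection of its classes into $X$. (Dropping this hypothesis, the image of $\Psi$ would be only the equivalence relations with at most $\vert X \vert$ classes, which is not closed — in accordance with Example~\ref{example:first}.)

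Finally I would observe that $\operatorname{Eq}(Y)$ is compact. The space $\{ 0,1 \}^{Y \times Y}$ is complete, being a product of complete spaces, and precompact, since for finite $S \subseteq Y \times Y$ the finitely many patterns on $S$ furnish representatives; hence it is compact, and $\operatorname{Eq}(Y)$ is the subset cut out by reflexivity, symmetry and transitivity, each a point-wise-closed condition, so $\operatorname{Eq}(Y)$ is a closed, hence compact, subspace. Then, given any open cover $\mathcal{U}$ of $X^{Y}/G$, its image $\overline{\Psi}(\mathcal{U})$ is an open cover of the compact space $\operatorname{Eq}(Y)$ and admits a finite subcover $\mathcal{V}$, whereupon $\overline{\Psi}^{-1}(\mathcal{V})$ is a finite subcover of $\mathcal{U} = \overline{\Psi}^{-1}(\overline{\Psi}(\mathcal{U}))$, exactly as in the proof of Lemma~\ref{lemma:dense.compactness}. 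This shows that $X^{Y}/G$ is compact.
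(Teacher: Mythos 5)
Your proof is correct, but it runs the identification in the opposite direction from the paper's and is organized around a different mechanism. The paper uses $\vert Y\vert\leq\vert X\vert$ to build a section $\phi$ from the compact space $E$ of equivalence relations on $Y$ into $X^{Y}$ with $\ker\phi(\theta)=\theta$, shows that $\psi\defeq\pi_{G}\circ\phi$ is continuous by exactly your key step (a kernel-preserving partial bijection between finite images extends to a permutation of the infinite set $X$, which density of $G$ lets one replace by a group element on a finite set), observes that $\psi(E)$ is dense in $X^{Y}/G$ by the same extension argument applied to $f$ and $\phi(\ker f)$, and then invokes Lemma~\ref{lemma:dense.compactness}. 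You instead descend the kernel map to $\overline{\Psi}\colon X^{Y}/G\to\operatorname{Eq}(Y)$ and prove the sharper, two-sided statement $(\overline{\Psi}\times\overline{\Psi})^{-1}(\beta_{F})=\alpha_{F}/G$ --- your extension-plus-density argument for the nontrivial inclusion is sound, and surjectivity correctly localizes the use of $\vert Y\vert\leq\vert X\vert$ --- after which you replay, for $\overline{\Psi}$ in place of $\pi_{\ast}$, precisely the cover-lifting mechanism by which the paper proves Lemma~\ref{lemma:dense.compactness}: both of your remaining checks go through, since by your base identity the fibres of $\overline{\Psi}$ are the classes of the intersection of all entourages, for which every open set is saturated, and openness follows from $\overline{\Psi}\bigl(B_{\alpha_{F}/G}(Gf)\bigr)=B_{\beta_{F}}(\ker f)$, which indeed needs surjectivity. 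The trade-off: the paper's route is shorter, needing only one direction of the kernel characterization and citing Lemma~\ref{lemma:dense.compactness} as a black box, whereas yours yields strictly more, namely an explicit isomorphism of the Hausdorff quotient $(X^{Y}/G)_{\ast}\cong X^{Y}/\!\!/G$ with $\operatorname{Eq}(Y)$, which connects nicely to the discussion of $X/\!\!/G$ at the end of Section~\ref{section:balanced.group.actions}; and your closing parenthetical about the image of $\Psi$ failing to be closed without the cardinality hypothesis is exactly the diagnosis the paper itself records (in the oligomorphic setting) in Example~\ref{example:second}.
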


\begin{proof} Notice that the set $E$ of all equivalence relations on $Y$ constitutes a closed subset of the compact Hausdorff space $\mathscr{P}(Y \times Y) \cong 2^{Y \times Y}$. Thus $E$ is a compact Hausdorff space when endowed with the corresponding subspace topology. Since $\vert Y \vert \leq \vert X \vert$, there exists a map $\phi \colon E \to X^{Y}$ such that $\ker \phi (\theta) = \theta$ for every $\theta \in E$. We argue that $\psi \defeq \pi_{G} \circ \phi \colon E \to X^{Y}/G$ is continuous. So, let $\theta \in E$ and let $U$ be an open neighborhood of $\psi(\theta)$ in $X^{Y}/G$. Then $\pi_{G}^{-1}(U)$ is an open neighborhood of $\phi (\theta)$ in $X^{Y}$, and thus there is a finite subset $F \subseteq Y$ with \begin{displaymath}
	\left. \phi (\theta) \in V \defeq \left\{ f \in X^{Y} \, \right| f|_{F} = \phi(\theta)|_{F} \right\} \subseteq \pi_{G}^{-1}(U) .
\end{displaymath} Evidently, $W \defeq \{ \theta' \in E \mid \theta' \cap (F \times F) = \theta \cap (F \times F) \}$ is an open neighborhood of $\theta$ in $E$. We show that $W \subseteq \psi^{-1}(U)$. Let $\theta' \in W$. Since \begin{displaymath}
	\ker \phi(\theta') \cap (F \times F) = \theta' \cap (F \times F) = \theta \cap (F \times F) = \ker \phi(\theta) \cap (F \times F) 
\end{displaymath} and $G$ is dense in $\Sym (X)$, there exists some element $g \in G$ such that $\phi(\theta')|_{F} = g\phi (\theta)|_{F}$, and therefore $\phi(\theta') = gg^{-1}\phi(\theta') \in gV \subseteq g\pi_{G}^{-1}(U) = \pi_{G}^{-1}(U)$, which proves our claim. Hence, $\psi$ is continuous, and so $\psi (E)$ is compact. Furthermore, $\psi (E)$ is dense in $X^{Y}/G$ because \begin{displaymath}
	f \in \overline{\Sym (X)\phi(\ker f)} = \overline{G\phi(\ker f)}
\end{displaymath} for every $f \in X^{Y}$. Consequently, $X^{Y}/G$ is compact by Lemma~\ref{lemma:dense.compactness}. \end{proof}

The following modification of Example~\ref{example:first} illustrates that the argument of the previous proof cannot be extended to oligomorphic group actions. As usual, we call an action of a group $G$ on a set $X$ \emph{oligomorphic} if $X^{n}/G$ is finite for every $n \geq 1$.

\begin{exmpl}\label{example:second} Let $X$ be any uncountable set and let $\theta$ be an equivalence relation on $X$ such that $X/\theta$ is countably infinite and $\vert [x]_{\theta} \vert = \vert [y]_{\theta} \vert$ for all $x,y \in X$. We observe that $G \defeq \Aut (X,\theta)$ is an oligomorphic subgroup of $\Sym (X)$, because \begin{displaymath}
	x \in Gy \ \Longleftrightarrow \ \forall i,j \in \{ 1,\ldots,n \} \colon \, (x_{i} = x_{j} \Leftrightarrow y_{i} = y_{j}) \wedge ((x_{i}, x_{j}) \in \theta \Leftrightarrow (y_{i},y_{j}) \in \theta)
\end{displaymath} for all $x,y \in X^{n}$ and any $n\geq 1$. However, $X^{X} /G $ is not compact, because \begin{displaymath}
	\left. \left. \left\{ \left\{ f \in X^{X} \, \right| (f(x),f(y)) \in \theta \right\} \, \right| x,y \in X, \, x \ne y \right\}
\end{displaymath} is a cover of $X^{X}$ by $G$-invariant open subsets, which does not admit a finite subcover. It is also clear at which point our argument in the proof of Proposition~\ref{proposition:almost.full.symmetric.group} fails here: the considered map $\phi$ could only be chosen for equivalence relations with index less than or equal to $\vert X \vert$, which is not a closed subset of $E$. \end{exmpl}

Let us also mention a linear version of Proposition~\ref{proposition:almost.full.symmetric.group}.

\begin{prop}\label{proposition:vector.space.automorphisms} Let $V$ and $W$ be infinite dimensional vector spaces over a finite field $\mathbb{F}$ such that $\dim(W) \leq \dim(V)$. If $G$ is a dense subgroup of $\Aut (V)$ (with regard to the topology of point-wise convergence), then $\Hom (W,V)/G$ is compact. \end{prop}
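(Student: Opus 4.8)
The plan is to adapt the proof of Proposition~\ref{proposition:almost.full.symmetric.group} almost verbatim, with the combinatorial data of equivalence relations replaced by the linear data of subspaces. First I would note that the canonical action of $G$ on $\Hom(W,V)$, a $G$-invariant closed subset of the product uniform space $V^{W}$ with $V$ discrete, is balanced, exactly as discussed before Proposition~\ref{proposition:countable.oligomorphicity}. The role played there by the space of equivalence relations on $Y$ is taken here by the set $\mathcal{L}$ of all linear subspaces of $W$, regarded as a subset of $2^{W}$ via characteristic functions. Being a subspace is cut out by conditions each depending on finitely many coordinates (containment of $0$, closure under addition, and closure under the finitely many scalar multiplications, where I use that $\mathbb{F}$ is finite), so $\mathcal{L}$ is closed in the compact Hausdorff space $2^{W}$ and hence itself compact Hausdorff. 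Since $\dim(W/K) \leq \dim W \leq \dim V$ for every $K \in \mathcal{L}$, I can choose, for each $K$, an injective linear map $W/K \hookrightarrow V$ and precompose it with the quotient map to obtain $\phi(K) \in \Hom(W,V)$ with $\ker \phi(K) = K$; this defines a map $\phi \colon \mathcal{L} \to \Hom(W,V)$.

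Next I would prove that $\psi \defeq \pi_{G} \circ \phi \colon \mathcal{L} \to \Hom(W,V)/G$ is continuous, which is the crux. Fix $K \in \mathcal{L}$ and an open neighbourhood $U$ of $\psi(K)$; then $\pi_{G}^{-1}(U)$ is an open neighbourhood of $\phi(K)$, so there is a finite $F \subseteq W$ with $\{ f \mid f|_{F} = \phi(K)|_{F} \} \subseteq \pi_{G}^{-1}(U)$. Because $\mathbb{F}$ is finite, the subspace $F_{0} \defeq \langle F \rangle$ spanned by $F$ is again a finite set, so $W_{0} \defeq \{ K' \in \mathcal{L} \mid K' \cap F_{0} = K \cap F_{0} \}$ is an open neighbourhood of $K$ in $\mathcal{L}$. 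For $K' \in W_{0}$ the restrictions $\phi(K)|_{F_{0}}$ and $\phi(K')|_{F_{0}}$ have the same kernel $K \cap F_{0} = K' \cap F_{0}$, hence images of the same finite dimension, so $\phi(K')|_{F_{0}} \circ (\phi(K)|_{F_{0}})^{-1}$ is a well-defined isomorphism between these two finite-dimensional images in $V$. Since $V$ is infinite-dimensional, both images admit complements of dimension $\dim V$, so this isomorphism extends to some $g_{0} \in \Aut(V)$ with $g_{0}\phi(K)|_{F_{0}} = \phi(K')|_{F_{0}}$. As $\phi(K)(F_{0})$ is finite, density of $G$ in $\Aut(V)$ supplies $g \in G$ agreeing with $g_{0}$ on it; then $g\phi(K)|_{F} = \phi(K')|_{F}$, whence $\phi(K') \in g\{ f \mid f|_{F} = \phi(K)|_{F} \} \subseteq g\pi_{G}^{-1}(U) = \pi_{G}^{-1}(U)$ by $G$-invariance, i.e. $\psi(K') \in U$. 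Thus $W_{0} \subseteq \psi^{-1}(U)$, so $\psi$ is continuous and $\psi(\mathcal{L})$ is compact.

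Finally I would show that $\psi(\mathcal{L})$ is dense: for arbitrary $f \in \Hom(W,V)$ the same extension-and-approximation argument, applied with $\phi(\ker f)$ in place of $\phi(K)$ and $f$ in place of $\phi(K')$ on each finite-dimensional $F_{0}$, shows $f \in \overline{G\phi(\ker f)}$, and pushing this through the continuous open map $\pi_{G}$ gives $\pi_{G}(f) \in \overline{\psi(\mathcal{L})}$. Hence $\Hom(W,V)/G$ contains the dense compact subset $\psi(\mathcal{L})$ and is compact by Lemma~\ref{lemma:dense.compactness}. The main obstacle is the continuity step, and specifically the two facts that let the symmetric-group argument carry over to the linear setting: that any isomorphism between finite-dimensional subspaces of the infinite-dimensional space $V$ extends to an automorphism of $V$ (the linear analogue of extending a partial bijection of $X$ to a permutation), and that finiteness of $\mathbb{F}$ renders finite-dimensional subspaces finite sets, so that ``agreement on $F_{0}$'' is an open condition and density of $G$ yields the required element. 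This is exactly where the finiteness of $\mathbb{F}$ enters, the structural hypothesis that, as the examples of this section illustrate in the combinatorial setting, cannot in general be dispensed with.
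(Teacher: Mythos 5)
Your proposal is correct and follows the paper's own proof essentially step for step: the same compact Hausdorff space of linear subspaces of $W$ inside $2^{W}$, the same section $\phi$ picking a homomorphism with prescribed kernel, the same continuity argument for $\pi_{G}\circ\phi$ using that the span of a finite set is finite over a finite field, the same density claim, and the same conclusion via Lemma~\ref{lemma:dense.compactness}; you merely spell out the extension step (an isomorphism between finite-dimensional subspaces of the infinite-dimensional $V$ extends to an automorphism, which density of $G$ then approximates) that the paper leaves implicit. The only quibble is your remark that finiteness of $\mathbb{F}$ is used for closedness of $\mathcal{L}$ in $2^{W}$ --- closedness holds over any field, since each closure condition involves only finitely many coordinates; finiteness of $\mathbb{F}$ is needed exactly where you also invoke it, namely to make the spanned subspace $F_{0}$ a finite set so that agreement on it is an open condition.
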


\begin{proof} Note that the set $S$ of all linear subspaces of $W$ forms a closed subset of the compact Hausdorff space $\mathscr{P}(W) \cong 2^{W}$. Thus $S$ is a compact Hausdorff space when endowed with the respective subspace topology. As $\dim(W) \leq \dim(V)$, there is a map $\phi \colon S \to L \defeq \Hom (W,V)$ such that $\Ker \phi (T) = T$ for every $T \in S$. We argue that $\psi \defeq \pi_{G} \circ \phi \colon E \to L/G$ is continuous. So, let $T \in S$ and let $U$ be an open neighborhood of $\psi(T)$ in $L/G$. Then $\pi_{G}^{-1}(U)$ is an open neighborhood of $\phi (T)$ in $L$, and thus there exists a finite subset $F \subseteq W$ such that \begin{displaymath}
	\left. \phi (T) \in V \defeq \left\{ f \in L \, \right| f|_{F} = \phi(\theta)|_{F} \right\} \subseteq \pi_{G}^{-1}(U) .
\end{displaymath} Denote by $Q$ the linear subspace of $W$ generated by $F$. Since the field $\mathbb{F}$ is finite, $Q$ is finite as well and hence $W \defeq \{ T' \in S \mid T' \cap Q = T \cap Q \}$ is an open neighborhood of $T$ in $S$. We show that $W \subseteq \psi^{-1}(U)$. Let $T' \in W$. Since $\Ker \phi(T') \cap Q = T' \cap Q = T \cap Q = \Ker \phi(T) \cap Q$ and $G$ is dense in $\Aut (V)$, there exists some $g \in G$ such that $\phi(T')|_{Q} = g \phi (T)|_{Q}$, and therefore $\phi(T') = gg^{-1}\phi(T') \in gV \subseteq g\pi_{G}^{-1}(U) = \pi_{G}^{-1}(U)$, which proves our claim. Hence, $\psi$ is continuous, and so $\psi (S)$ is compact. Furthermore, $\psi (S)$ is dense in $L/G$ because \begin{displaymath}
	f \in \overline{\Aut (V)\phi(\Ker f)} = \overline{G\phi(\ker f)}
\end{displaymath} for every $f \in L$. Consequently, $L/G$ is compact by Lemma~\ref{lemma:dense.compactness}. \end{proof}

We observe that the considered quotient spaces usually lack the property of being Hausdorff. For instance, this problem occurs in Proposition~\ref{proposition:almost.full.symmetric.group}.

\begin{exmpl}[cf.~Proposition~\ref{proposition:almost.full.symmetric.group}] Let $X$ be an infinite set. Consider the balanced action of the group $G \defeq \Sym (X)$ on the uniform space $X^X$. If $f \colon X \to X$ is injective, but not surjective, then evidently $\id_{X} \notin Gf$, although $(G,Gf) \in \alpha/G$ for every entourage $\alpha$ of $X^X$. Hence, $X^X/G$ is not a Hausdorff space. \end{exmpl}

We conclude this section with an alternative description of the Hausdorff quotient (see Section~\ref{section:uniform.spaces}) for quotients of balanced group actions on uniform spaces, which is nothing but a straightforward generalization of the construction studied in \cite{YaacovTsankov} for isometric group actions. We start with the following observation.

\begin{lem}\label{lemma:closed.partition} Consider a balanced action of a group $G$ on some uniform space $X$. Then $X/\! \! /G \defeq \{ \overline{Gx} \mid x \in X \}$ is a partition of $X$ into closed $G$-invariant subsets. \end{lem}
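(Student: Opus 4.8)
\emph{Proof proposal.} The plan is to realise the sets $\overline{Gx}$ as the classes of an equivalence relation on $X$, from which the partition property will be immediate. Concretely, I would define a relation $\approx$ on $X$ by declaring $x \approx y$ if and only if $x \in \overline{Gy}$, and then verify that $\approx$ is an equivalence relation. Reflexivity is clear, since $x \in Gx \subseteq \overline{Gx}$. Once $\approx$ is established as an equivalence relation, the class of a point $x$ is exactly $\{ y \mid y \in \overline{Gx} \} = \overline{Gx}$, so $X/\!\!/G$ coincides with the set of $\approx$-classes and is therefore a partition of $X$.

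Before turning to $\approx$, I would record two structural facts. First, each map $x \mapsto gx$ with $g \in G$ is a uniform isomorphism: it is uniformly continuous because the balanced family $\{ x \mapsto gx \mid g \in G \}$ is uniformly equicontinuous, and its inverse $x \mapsto g^{-1}x$ belongs to the same family; hence $g$ is in particular a homeomorphism, so $g\overline{Gx} = \overline{gGx} = \overline{Gx}$. This gives the required $G$-invariance of each member of $X/\!\!/G$, while closedness is automatic. Second, I would observe that the balanced hypothesis supplies, for every entourage $\alpha$ of $X$, a $G$-invariant symmetric entourage $\gamma$ with $\gamma \circ \gamma \subseteq \alpha$: choose a $G$-invariant $\alpha' \subseteq \alpha$, then $\beta$ with $\beta \circ \beta \subseteq \alpha'$, then a $G$-invariant $\beta' \subseteq \beta$, and finally set $\gamma \defeq \beta' \cap (\beta')^{-1}$ (which is again $G$-invariant and symmetric).

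The substance of the argument is symmetry and transitivity of $\approx$, both of which combine these $G$-invariant symmetric entourages with the group structure. For symmetry, suppose $x \in \overline{Gy}$ and let $\beta$ be an arbitrary entourage; picking a $G$-invariant symmetric $\gamma \subseteq \beta$, there is $g \in G$ with $(x,gy) \in \gamma$, hence $(gy,x) \in \gamma$, and applying $g^{-1}$ together with $G$-invariance yields $(y,g^{-1}x) \in \gamma \subseteq \beta$; since $g^{-1}x \in Gx$ and $\beta$ was arbitrary, $y \in \overline{Gx}$. For transitivity, suppose $x \in \overline{Gy}$ and $y \in \overline{Gz}$, fix an entourage $\alpha$, and choose a $G$-invariant symmetric $\gamma$ with $\gamma \circ \gamma \subseteq \alpha$; there are $g,h \in G$ with $(x,gy) \in \gamma$ and $(y,hz) \in \gamma$, and applying $g$ to the latter gives $(gy,ghz) \in \gamma$ by $G$-invariance, so $(x,ghz) \in \gamma \circ \gamma \subseteq \alpha$ with $ghz \in Gz$; as $\alpha$ was arbitrary, $x \in \overline{Gz}$.

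I expect the main obstacle to be the symmetry step. It is the single point where one must genuinely invert a group element and rely on the $G$-invariance of the chosen entourage to transport the witnessing pair $(x,gy)$ back to a pair $(y,g^{-1}x)$, and it is exactly here that the balanced hypothesis — rather than mere continuity of the action — is indispensable. Everything else, namely reflexivity, the $G$-invariance and closedness of the classes, and the final passage from ``$\approx$ is an equivalence relation'' to ``$X/\!\!/G$ is a partition into closed $G$-invariant subsets'', is then routine.
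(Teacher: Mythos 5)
Your proof is correct and uses essentially the same mechanism as the paper: both arguments rest on the $G$-invariant (symmetric) entourages supplied by the balanced hypothesis to transport a witnessing pair such as $(x,gy)$ to $(y,g^{-1}x)$, and on composing two such entourages to pass through a common point. The paper merely packages this as a direct verification that intersecting closures $\overline{Gx}$, $\overline{Gy}$ coincide, whereas you verify symmetry and transitivity of the relation $x \approx y \Leftrightarrow x \in \overline{Gy}$ -- a routine reformulation, so the two proofs are the same in substance.
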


\begin{proof} Let $x,y \in X$ with $\overline{Gx} \cap \overline{Gy} \ne \emptyset$. We show that $y \in \overline{Gx}$. To this end, let $\alpha$ be a $G$-invariant entourage of $X$. Since $\overline{Gx} \cap \overline{Gy} \ne \emptyset$, we conclude that $(Gy \times Gx) \cap \alpha \ne \emptyset$. Due to $\alpha$ being $G$-invariant, this implies that $(\{ y \} \times Gx) \cap \alpha \ne \emptyset$. Thus, $B_{\alpha}(y) \cap Gx \ne \emptyset$. As $X$ is $G$-balanced, it follows that $y \in \overline{Gx}$. On account of symmetry, we have $x \in \overline{Gy}$ as well, and hence $\overline{Gx} = \overline{Gy}$. This proves our claim. \end{proof}

We endow the quotient suggested by the previous lemma with a suitable uniform structure. Consider a balanced action of a group $G$ on some uniform space $X$. For any entourage $\alpha$ of $X$, let \begin{align*}
	\alpha /\! \! /G &\defeq \{ (P,Q) \in (X/\! \! /G) \times (X/\! \! /G) \mid (P \times Q) \cap \alpha \ne \emptyset \} .
\end{align*} It is straightforward to check that $\{ \alpha /\! \! /G \mid \alpha \text{ entourage of } X \}$ constitutes a uniformity base on the set $X/\! \! /G$, and we equip $X/\! \! /G$ with the generated uniformities. The final observation of this section, whose straightforward proof we leave to the interested reader, clarifies the connection between the introduced quotient spaces.

\begin{prop} Consider a balanced action of a group $G$ on some uniform space $X$. Then \begin{displaymath}
	\phi \colon (X/G)_{\ast} \to X/\! \! /G, \quad [Gx]_{\sim} \mapsto \overline{Gx}
\end{displaymath} is an isomorphism of uniform spaces. In particular, $X/\! \! /G$ is a Hausdorff space. \end{prop}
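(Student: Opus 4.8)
The plan is to exhibit $\phi$ as a well-defined bijection and then check that it and its inverse are uniformly continuous by comparing the two uniformity bases directly. First I would verify that $\phi$ is well defined and injective. The equivalence relation $\sim$ on $X/G$ is the intersection of all entourages $\alpha/G$, so $[Gx]_\sim = [Gy]_\sim$ means that $(Gx, Gy) \in \alpha/G$ for every entourage $\alpha$ of $X$, i.e.\ $(Gx \times Gy) \cap \alpha \ne \emptyset$ for all $\alpha$. Using $G$-invariance of a cofinal family of $\alpha$, exactly as in the proof of Lemma~\ref{lemma:closed.partition}, this reduces to $B_\alpha(y) \cap Gx \ne \emptyset$ for all $G$-invariant $\alpha$, which says precisely that $y \in \overline{Gx}$, equivalently $\overline{Gx} = \overline{Gy}$. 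Thus $[Gx]_\sim = [Gy]_\sim$ if and only if $\overline{Gx} = \overline{Gy}$, which simultaneously establishes that the assignment $[Gx]_\sim \mapsto \overline{Gx}$ is well defined and injective. Surjectivity is immediate from the definition of $X/\!\!/G$, so $\phi$ is a bijection.

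Next I would match up the uniformities. The space $(X/G)_\ast$ carries the uniformity generated by the sets $(\alpha/G)_\ast$, while $X/\!\!/G$ carries the one generated by the sets $\alpha/\!\!/G$, as $\alpha$ ranges over the entourages of $X$. The heart of the argument is the claim that, under the bijection $\phi$, these two bases coincide: I would show that $(\phi \times \phi)\big((\alpha/G)_\ast\big) = \alpha/\!\!/G$ for each entourage $\alpha$ of $X$. Unwinding the definitions, a pair $\big([Gx]_\sim, [Gy]_\sim\big)$ lies in $(\alpha/G)_\ast$ exactly when every representative $\sim$-class in $[Gx]_\sim$ relates to every one in $[Gy]_\sim$ via $\alpha/G$, whereas $\big(\overline{Gx}, \overline{Gy}\big) \in \alpha/\!\!/G$ means $(\overline{Gx} \times \overline{Gy}) \cap \alpha \ne \emptyset$. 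The translation between ``$(P \times Q) \cap \alpha \ne \emptyset$'' phrased at the level of orbits versus orbit closures is exactly where $G$-invariance of $\alpha$ (restricting to a cofinal base of $G$-invariant entourages) and the density of $Gx$ in $\overline{Gx}$ are used, together with the fact that $\alpha$ may be enlarged to a $G$-invariant symmetric entourage $\beta$ with $\beta \circ \beta \subseteq \alpha$ to absorb the passage to closures.

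Once the base sets are identified via $\phi$, uniform continuity of $\phi$ and of $\phi^{-1}$ is automatic: every basic entourage on one side is the $\phi$-image of a basic entourage on the other, so $(\phi \times \phi)$ and its inverse send bases to bases, giving an isomorphism of uniform spaces. The final sentence, that $X/\!\!/G$ is Hausdorff, then follows at once because $(X/G)_\ast$ is a Hausdorff quotient by construction (as recalled in Section~\ref{section:uniform.spaces}) and being Hausdorff is a uniform-space invariant.

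I expect the main obstacle to be the careful bookkeeping in the identification $(\phi \times \phi)\big((\alpha/G)_\ast\big) = \alpha/\!\!/G$, in particular the forward direction where one must show that if every pair of orbits in the two $\sim$-classes meets every scale $\alpha$, then the two orbit closures already meet a single $\alpha$. This is precisely the step that trades a quantifier over all entourages (hidden in the definition of $\sim$, hence of $(\alpha/G)_\ast$) for a single intersection-with-$\alpha$ condition, and it relies essentially on the balancedness hypothesis; everything else is routine unwinding of definitions. This is presumably why the authors deem the proof ``straightforward'' and leave it to the reader.
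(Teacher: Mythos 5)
The paper explicitly leaves this proof to the interested reader, and your proposal is precisely the intended straightforward argument: well-definedness and injectivity via the computation of Lemma~\ref{lemma:closed.partition} (orbit closures coincide iff the orbits are $\sim$-related in $X/G$), surjectivity by definition, and an identification of the two uniformity bases under $\phi$, with the Hausdorff property transported from $(X/G)_{\ast}$. The only imprecision is your claimed set equality $(\phi \times \phi)\bigl((\alpha/G)_{\ast}\bigr) = \alpha/\!\!/G$: with the reading $\mathcal{P} \times \mathcal{Q} \subseteq \alpha/G$ of $(\alpha/G)_{\ast}$ one in general only obtains the inclusion $(\phi \times \phi)\bigl((\alpha/G)_{\ast}\bigr) \subseteq \alpha/\!\!/G$ together with $(\phi \times \phi)^{-1}\bigl(\beta/\!\!/G\bigr) \subseteq (\alpha/G)_{\ast}$ for a $G$-invariant symmetric $\beta$ with $\beta \circ \beta \circ \beta \subseteq \alpha$ --- the delicate step being the passage from orbit closures back to orbits (the converse of the direction you single out) --- but this mutual refinement of bases is exactly what an isomorphism of uniform spaces requires, so your proof goes through as outlined.
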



\section{Almost locally finite algebras}\label{section:almost.locally.finite.algebras}

For a certain class of algebras, which encompasses locally oligomorphic and locally finite algebras, the assumption of uniform continuity in Theorem~\ref{theorem:uniform.birkhoff} can be weakened to Cauchy-continuity (see Theorem~\ref{theorem:almost.locally.finite.algebras}). In order to explain and prove this result, we first need to introduce some additional terminology.

\begin{definition}\label{definition:almost.locally.finite} Let $\mathbf{A}$ be an algebra. Consider the group \begin{displaymath}
\mathscr{G}(\mathbf{A}) \defeq \{ f \in \cclone{1}(\mathbf{A}) \mid \exists g \in \cclone{1}(\mathbf{A}) \colon \, f \circ g = g \circ f = \id_{A} \} .
\end{displaymath} We call $\mathbf{A}$ \emph{almost locally finite} if the quotient space $\cclone{n}(\mathbf{A})/\mathscr{G}(\mathbf{A})$ is compact for any $n \geq 1$. \end{definition}

Recall that an algebra $\mathbf{A}$ is \emph{locally finite} if every of its finitely generated subalgebras is finite. The subsequent result justifies the terminology of Definition~\ref{definition:almost.locally.finite}. For the sake of convenience, let us agree on the following notation: for any finite list of elements $a_{1},\ldots,a_{n}$ of an algebra $\mathbf{A}$, we denote by $\langle a_{1},\ldots,a_{n} \rangle_{\mathbf{A}}$ the subalgebra of $\mathbf{A}$ generated by $\{ a_{1},\ldots,a_{n} \}$.

\begin{prop}\label{proposition:locally.finite.algebras} An algebra $\mathbf{A}$ is locally finite if and only if $\cclone{n}(\mathbf{A})$ is compact for all $n \geq 1$. \end{prop}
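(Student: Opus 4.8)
The plan is to reduce everything to a single observation: for every term function $f \in \clone{n}(\mathbf{A})$ and every tuple $a = (a_{1},\ldots,a_{n}) \in A^{n}$ one has $f(a) \in \langle a_{1},\ldots,a_{n}\rangle_{\mathbf{A}}$, and this containment survives passage to the closure because $A$ is discrete. Concretely, I would first establish the identity
\[
	\mathrm{ev}_{a}\!\left(\cclone{n}(\mathbf{A})\right) = \langle a_{1},\ldots,a_{n}\rangle_{\mathbf{A}}, \qquad \mathrm{ev}_{a} \defeq \pr_{a}\vert_{\cclone{n}(\mathbf{A})} \colon \cclone{n}(\mathbf{A}) \to A, \ f \mapsto f(a).
\]
The inclusion $\supseteq$ is immediate since every element of the generated subalgebra is $f(a)$ for some $f \in \clone{n}(\mathbf{A}) \subseteq \cclone{n}(\mathbf{A})$, and $\subseteq$ holds because any $g \in \cclone{n}(\mathbf{A})$ is a point-wise limit of term functions, so that $g(a) = f(a)$ for some $f \in \clone{n}(\mathbf{A})$ by discreteness of $A$.

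For the direction ``compact $\Rightarrow$ locally finite'', I would fix $n \geq 1$ and a tuple $a \in A^{n}$, note that $\mathrm{ev}_{a}$ is continuous (as $A$ carries the discrete topology), and conclude that if $\cclone{n}(\mathbf{A})$ is compact then its continuous image $\langle a_{1},\ldots,a_{n}\rangle_{\mathbf{A}}$ is a compact subset of the discrete space $A$, hence finite. Since this holds for all $n \geq 1$ and all tuples, every finitely generated subalgebra of $\mathbf{A}$ is finite (the empty generating set being subsumed, as its subalgebra lies inside $\langle a\rangle_{\mathbf{A}}$ for any $a \in A$).

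For the converse ``locally finite $\Rightarrow$ compact'', I would fix $n \geq 1$, set $C_{a} \defeq \langle a_{1},\ldots,a_{n}\rangle_{\mathbf{A}}$ for each $a \in A^{n}$ (finite by hypothesis), and form the subspace $P \defeq \prod_{a \in A^{n}} C_{a} \subseteq A^{A^{n}}$. By Tychonoff's theorem $P$ is compact as a product of finite discrete spaces, and hence closed in the Hausdorff space $A^{A^{n}}$. Because $f(a) \in C_{a}$ for every $f \in \clone{n}(\mathbf{A})$ and every $a$, we get $\clone{n}(\mathbf{A}) \subseteq P$, and therefore $\cclone{n}(\mathbf{A}) = \overline{\clone{n}(\mathbf{A})} \subseteq P$. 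As $\cclone{n}(\mathbf{A})$ is closed in $A^{A^{n}}$, it is a closed subset of the compact set $P$ and thus compact.

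The argument is essentially routine once the defining observation is in place. The only step deserving genuine care is the identity $\mathrm{ev}_{a}(\cclone{n}(\mathbf{A})) = \langle a_{1},\ldots,a_{n}\rangle_{\mathbf{A}}$, that is, the fact that forming the point-wise closure does not enlarge the set of values attainable at a fixed tuple; this is precisely where the discreteness of $A$ is used, and it is what makes both implications collapse to statements about the finiteness of the generated subalgebras.
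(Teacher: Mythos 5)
Your proof is correct and takes essentially the same route as the paper: the direction ``locally finite $\Rightarrow$ compact'' via Tychonoff and the containment $\cclone{n}(\mathbf{A}) \subseteq \prod_{a \in A^{n}} \langle a_{1},\ldots,a_{n}\rangle_{\mathbf{A}} \subseteq A^{A^{n}}$, and the converse via the continuous evaluation map $f \mapsto f(a_{1},\ldots,a_{n})$. Your key identity $\mathrm{ev}_{a}\bigl(\cclone{n}(\mathbf{A})\bigr) = \langle a_{1},\ldots,a_{n}\rangle_{\mathbf{A}}$ is exactly the paper's observation that $h\bigl(\cclone{n}(\mathbf{A})\bigr) = h(\clone{n}(\mathbf{A})) = \langle a_{1},\ldots,a_{n}\rangle_{\mathbf{A}}$, merely spelled out in more detail.
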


\begin{proof} ($\Longrightarrow$) Suppose that $\mathbf{A}$ is locally finite. Let $n \in \mathbb{N}$. We observe that \begin{displaymath}
	\cclone{n}(\mathbf{A}) \, \subseteq \, \prod_{a \in A^{n}} \langle a_{1},\ldots,a_{n} \rangle_{\mathbf{A}} \, \subseteq \, A^{A^{n}} .
\end{displaymath} By Tychonoff's theorem, the space in the middle is compact. Since the first space is closed in the third one, it is also closed in the second one. Hence, the first space is compact itself.
	
($\Longleftarrow$) Let $a_{1},\ldots,a_{n} \in A$. Note that the map $h \colon A^{A^{n}} \to A , \, f \mapsto f(a_{1},\ldots,a_{n})$ is continuous with respect to the discrete topology on $A$ and the corresponding product topology on $A^{A^{n}}$. Besides, $h\left(\cclone{n}(\mathbf{A})\right) = h(\clone{n}(\mathbf{A})) = \langle a_{1},\ldots,a_{n} \rangle_{\mathbf{A}}$. Since $\cclone{n}(\mathbf{A})$ is compact, we conclude that $\langle a_{1},\ldots,a_{n} \rangle_{\mathbf{A}}$ is compact and hence finite. \end{proof}

\begin{cor} Every locally finite algebra is almost locally finite. \end{cor}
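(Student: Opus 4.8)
The plan is to deduce this directly from Proposition~\ref{proposition:locally.finite.algebras} together with the elementary properties of balanced group actions established in Section~\ref{section:balanced.group.actions}. Let $\mathbf{A}$ be a locally finite algebra and fix $n \geq 1$. By Proposition~\ref{proposition:locally.finite.algebras} the uniform space $\cclone{n}(\mathbf{A})$ is compact, so it suffices to realize $\cclone{n}(\mathbf{A})/\mathscr{G}(\mathbf{A})$ as a continuous image of this compact space, since compactness is preserved under continuous surjections.

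To this end I would first exhibit the relevant action. Each $f \in \mathscr{G}(\mathbf{A})$ possesses a two-sided inverse and is therefore a bijection of $A$, so $\mathscr{G}(\mathbf{A})$ acts on $A$ and hence, by the canonical construction discussed immediately preceding Proposition~\ref{proposition:countable.oligomorphicity}, acts balancedly on $A^{A^{n}}$ via $(g \cdot h)(a) \defeq g(h(a))$ for $h \in A^{A^{n}}$ and $a \in A^{n}$. The point that genuinely requires checking is that $\cclone{n}(\mathbf{A})$ is a $\mathscr{G}(\mathbf{A})$-invariant subset of $A^{A^{n}}$, i.e.\ that $g \circ h \in \cclone{n}(\mathbf{A})$ whenever $g \in \mathscr{G}(\mathbf{A})$ and $h \in \cclone{n}(\mathbf{A})$. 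This I would verify by a routine local-approximation argument: given any finite $F \subseteq A^{n}$, choose $h' \in \clone{n}(\mathbf{A})$ with $h'\vert_{F} = h\vert_{F}$ and then, using $g \in \cclone{1}(\mathbf{A})$, choose $g' \in \clone{1}(\mathbf{A})$ agreeing with $g$ on the finite set $\{ h(a) \mid a \in F \}$; since clones are closed under composition, $g' \circ h' \in \clone{n}(\mathbf{A})$ agrees with $g \circ h$ on $F$. As $F$ was arbitrary, $g \circ h$ lies in the closure $\cclone{n}(\mathbf{A})$. Restricting the balanced action from $A^{A^{n}}$ to this invariant subspace then yields a balanced action of $\mathscr{G}(\mathbf{A})$ on $\cclone{n}(\mathbf{A})$, so that the quotient uniform space $\cclone{n}(\mathbf{A})/\mathscr{G}(\mathbf{A})$ is meaningfully defined.

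With this in place the conclusion is immediate: the quotient map $\pi_{\mathscr{G}(\mathbf{A})} \colon \cclone{n}(\mathbf{A}) \to \cclone{n}(\mathbf{A})/\mathscr{G}(\mathbf{A})$ is continuous by the proposition on balanced actions in Section~\ref{section:balanced.group.actions}, whence its image $\cclone{n}(\mathbf{A})/\mathscr{G}(\mathbf{A})$ is a continuous image of the compact space $\cclone{n}(\mathbf{A})$ and is therefore compact. Since $n \geq 1$ was arbitrary, $\mathbf{A}$ is almost locally finite by Definition~\ref{definition:almost.locally.finite}. The only genuine verification, and thus the main (though minor) obstacle, is the $\mathscr{G}(\mathbf{A})$-invariance of $\cclone{n}(\mathbf{A})$; everything else is a formal appeal to the fact that continuous images of compact spaces are compact, and requires no appeal to the Hausdorff property, which these quotients generally lack.
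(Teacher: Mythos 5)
Your proposal is correct and follows essentially the same route as the paper, which states the corollary without proof as an immediate consequence of Proposition~\ref{proposition:locally.finite.algebras}: compactness of $\cclone{n}(\mathbf{A})$ passes to the quotient because the map $\pi_{\mathscr{G}(\mathbf{A})}$ is continuous. Your explicit verification that $\cclone{n}(\mathbf{A})$ is $\mathscr{G}(\mathbf{A})$-invariant (via the local-approximation argument) is sound and merely spells out a fact the paper tacitly assumes in Definition~\ref{definition:almost.locally.finite}.
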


Let us point out another class of almost locally finite algebras.

\begin{prop} If $\mathbf{A}$ is an algebra and $\mathscr{G}(\mathbf{A}) = \Sym (A)$, then $\mathbf{A}$ is almost locally finite. \end{prop}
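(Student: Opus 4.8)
The plan is to realise $\cclone{n}(\mathbf{A})/\mathscr{G}(\mathbf{A})$ as a closed subspace of a quotient already known to be compact by Proposition~\ref{proposition:almost.full.symmetric.group}. First I would dispose of the degenerate case: if $A$ is finite, then $A^{A^{n}}$ is finite, hence so are $\cclone{n}(\mathbf{A})$ and the quotient $\cclone{n}(\mathbf{A})/\mathscr{G}(\mathbf{A})$, which is therefore compact. Consequently, I may assume $A$ to be infinite, fix $n \geq 1$, and write $G \defeq \mathscr{G}(\mathbf{A})$, so that $G = \Sym(A)$ by hypothesis. Since every element of $\mathscr{G}(\mathbf{A})$ is an invertible member of $\cclone{1}(\mathbf{A})$, hence a bijection of $A$, the relevant action is the canonical post-composition action $(gf)(a) \defeq g(f(a))$ of $G$ on $A^{A^{n}}$, which is balanced with respect to the product uniformity induced by the discrete space $A$.

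Next I would verify that $\cclone{n}(\mathbf{A})$ is a closed $G$-invariant subspace, so that the restricted action is again balanced. Closedness is part of the definition. For invariance, given $g \in G \subseteq \cclone{1}(\mathbf{A})$ and $f \in \cclone{n}(\mathbf{A})$, and any finite $F \subseteq A^{n}$, I would pick a unary term function agreeing with $g$ on the finite set $f(F)$ and an $n$-ary term function agreeing with $f$ on $F$; their composite is an $n$-ary term function agreeing with $g \circ f = gf$ on $F$. As $F$ was arbitrary, $gf \in \cclone{n}(\mathbf{A})$.

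Now, since $A$ is infinite, $\lvert A^{n}\rvert = \lvert A\rvert$, and $G = \Sym(A)$ is trivially dense in $\Sym(A)$; applying Proposition~\ref{proposition:almost.full.symmetric.group} with $X = A$ and $Y = A^{n}$ shows that $A^{A^{n}}/G$ is compact. Because the uniformity on $A^{A^{n}}/G$ induces the quotient topology of $\pi_{G}$ (by the earlier proposition establishing that $\pi_{G}$ is open with this property) and $\cclone{n}(\mathbf{A})$ is $G$-invariant, its saturation satisfies $\pi_{G}^{-1}(\pi_{G}(\cclone{n}(\mathbf{A}))) = \cclone{n}(\mathbf{A})$, which is closed. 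Hence $\pi_{G}(\cclone{n}(\mathbf{A}))$ is a closed, and therefore compact, subspace of $A^{A^{n}}/G$.

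The last and most delicate step is to identify $\cclone{n}(\mathbf{A})/G$ with this subspace. Exploiting $G$-invariance once more, I would check that for any entourage $\alpha$ of $A^{A^{n}}$ and any $f,f' \in \cclone{n}(\mathbf{A})$ the conditions $(Gf \times Gf') \cap (\alpha \cap (\cclone{n}(\mathbf{A}) \times \cclone{n}(\mathbf{A}))) \neq \emptyset$ and $(Gf \times Gf') \cap \alpha \neq \emptyset$ coincide, since $gf, g'f' \in \cclone{n}(\mathbf{A})$ automatically for $g,g' \in G$. This shows that the quotient uniformity on $\cclone{n}(\mathbf{A})/G$ agrees, under the obvious bijection, with the subspace uniformity that $\pi_{G}(\cclone{n}(\mathbf{A}))$ inherits from $A^{A^{n}}/G$; thus $\cclone{n}(\mathbf{A})/G$ is uniformly isomorphic to a compact space and hence compact. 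As $n \geq 1$ was arbitrary, $\mathbf{A}$ is almost locally finite in the sense of Definition~\ref{definition:almost.locally.finite}. I expect this final identification to be the main obstacle: neither the openness of $\pi_{G}$ nor the compactness of the ambient quotient descends to a subspace on its own, and it is precisely the $G$-invariance (saturation) of $\cclone{n}(\mathbf{A})$ that lets one interchange ``quotient of a subspace'' with ``subspace of the quotient.''
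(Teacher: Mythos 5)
Your argument is correct and is essentially the paper's own proof: both reduce to the infinite case, apply Proposition~\ref{proposition:almost.full.symmetric.group} with $X = A$, $Y = A^{n}$ (using $\lvert A^{n}\rvert = \lvert A\rvert$ and that $\Sym(A)$ is dense in itself) to get compactness of $A^{A^{n}}/\mathscr{G}$, and then conclude via the fact that $\cclone{n}(\mathbf{A})$ is a closed $\mathscr{G}$-invariant subset whose quotient sits as a closed subspace of $A^{A^{n}}/\mathscr{G}$. The only difference is that you spell out the routine details the paper leaves implicit -- the $\mathscr{G}$-invariance of $\cclone{n}(\mathbf{A})$ via approximation by term functions, and the identification of the quotient of the saturated subspace with the corresponding closed subspace of the quotient -- and these verifications are accurate.
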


\begin{proof} Without loss of generality, we may assume that $A$ is infinite. Let $\mathscr{G} = \mathscr{G}(\mathbf{A}) = \Sym (A)$ and consider any $n \geq 1$. By assumption, $\vert A^{n} \vert = \vert A \vert$ and therefore $A^{A^{n}}/\mathscr{G}$ is compact by Proposition~\ref{proposition:almost.full.symmetric.group}. Since $\cclone{n}(\mathbf{A})$ is a $\mathscr{G}$-invariant closed subset of $A^{A^{n}}$, it follows that $\cclone{n}(\mathbf{A})/\mathscr{G}$ is closed in $A^{A^{n}}/\mathscr{G}$, and thus $\cclone{n}(\mathbf{A})/\mathscr{G}$ is compact itself. \end{proof}

The next result, which follows rather immediately from Proposition~\ref{proposition:countable.oligomorphicity}, is supposed to shed some more light on the concept of almost locally finiteness for countable algebras.

\begin{prop}\label{proposition:countable.almost.locally.finite} Let $\mathbf{A}$ be a countable algebra and $\mathscr{G} \defeq \mathscr{G}(\mathbf{A})$. Then $\mathbf{A}$ is almost locally finite if and only if $\mathbf{B}/\mathscr{G}$ is finite for every finitely generated subalgebra $\mathbf{B} \leq \mathbf{A}^{n}$ with $n \geq 1$. \end{prop}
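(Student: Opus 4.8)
The plan is to derive the equivalence from Proposition~\ref{proposition:countable.oligomorphicity}, applied to the group $\mathscr{G}$ acting coordinate-wise on $A$ and, for each $n \geq 1$, to the subset $\cclone{n}(\mathbf{A})$ of $A^{A^{n}}$ (so that here $Y = A^{n}$). If $A$ is finite then $\mathbf{A}$ is locally finite and both conditions hold trivially; hence I may assume $A$ to be countably infinite, so that every $A^{n}$ is countably infinite. To bring Proposition~\ref{proposition:countable.oligomorphicity} to bear I first note that $\cclone{n}(\mathbf{A})$ is $\mathscr{G}$-invariant for the action $(gf)(a) = g(f(a))$: the closure $\cClo(\mathbf{A})$ of $\Clo(\mathbf{A})$ is again a clone, and since $\mathscr{G} \subseteq \cclone{1}(\mathbf{A})$, composing an element of $\cclone{n}(\mathbf{A})$ with some $g \in \mathscr{G}$ stays inside $\cclone{n}(\mathbf{A})$ (for a finite $F \subseteq A^{n}$ one approximates $f$ by a term function on $F$ and $g$ by a unary term function on the resulting finite image). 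Being closed by construction, $\cclone{n}(\mathbf{A})$ thus satisfies the hypotheses of Proposition~\ref{proposition:countable.oligomorphicity}, which yields: $\mathbf{A}$ is almost locally finite if and only if $\pr_{F}(\cclone{n}(\mathbf{A}))/\mathscr{G}$ is finite for all $n \geq 1$ and all finite $F \subseteq A^{n}$.

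The key step is to recognise $\pr_{F}(\cclone{n}(\mathbf{A}))$ as a finitely generated subalgebra of a finite power of $\mathbf{A}$. Since each $f \in \cclone{n}(\mathbf{A})$ agrees on the finite set $F$ with some $n$-ary term function, we have $\pr_{F}(\cclone{n}(\mathbf{A})) = \pr_{F}(\clone{n}(\mathbf{A}))$. Writing $d_{j} \in A^{F}$ for $d_{j}(a) \defeq a_{j}$ ($a \in F$), the same coordinate-wise computation as in the proof of Theorem~\ref{theorem:uniform.birkhoff} gives $\pr_{F}(\clone{n}(\mathbf{A})) = \{ (f(a))_{a \in F} \mid f \in \clone{n}(\mathbf{A}) \} = D$, where $\mathbf{D} \defeq \langle d_{1},\ldots,d_{n} \rangle_{\mathbf{A}^{F}} \leq \mathbf{A}^{F}$. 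Thus $\pr_{F}(\cclone{n}(\mathbf{A}))/\mathscr{G} = D/\mathscr{G}$ with $\mathbf{D}$ a finitely generated subalgebra of the finite power $\mathbf{A}^{F}$. This already proves the implication $(\Leftarrow)$: if $\mathbf{B}/\mathscr{G}$ is finite for every finitely generated $\mathbf{B} \leq \mathbf{A}^{m}$, then in particular each $D/\mathscr{G}$ is finite, and $\mathbf{A}$ is almost locally finite.

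For the implication $(\Rightarrow)$ I would conversely exhibit every finitely generated $\mathbf{B} \leq \mathbf{A}^{m}$ as a $\mathscr{G}$-equivariant homomorphic image of such a $\mathbf{D}$. Choosing generators $b_{1},\ldots,b_{n}$ of $\mathbf{B}$, put $a^{(i)} \defeq (b_{1}(i),\ldots,b_{n}(i)) \in A^{n}$ for $i \leq m$ and $F \defeq \{ a^{(1)},\ldots,a^{(m)} \}$. The coordinate-substitution map $\rho \colon \mathbf{A}^{F} \to \mathbf{A}^{m}$, $\rho(c)(i) \defeq c(a^{(i)})$, is a homomorphism with $\rho(d_{j}) = b_{j}$, hence $\rho(\mathbf{D}) = \mathbf{B}$, and it is plainly $\mathscr{G}$-equivariant. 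Consequently $\rho$ induces a surjection of $D/\mathscr{G}$ onto $B/\mathscr{G}$, so finiteness of $D/\mathscr{G}$ -- guaranteed by almost local finiteness through Proposition~\ref{proposition:countable.oligomorphicity} -- forces $B/\mathscr{G}$ to be finite.

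Routine verifications aside, I expect the main obstacle to be the bookkeeping linking the two families: checking that $\cclone{n}(\mathbf{A})$ is genuinely $\mathscr{G}$-invariant (which rests on the closure of a clone being a clone) and that the substitution map $\rho$ is at once surjective onto $\mathbf{B}$ and $\mathscr{G}$-equivariant, so that the orbit counts of $\mathbf{D}$ and $\mathbf{B}$ may legitimately be compared.
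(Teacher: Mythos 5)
Your proof is correct and follows essentially the same route as the paper: both directions reduce to Proposition~\ref{proposition:countable.oligomorphicity} applied to $Z = \cclone{n}(\mathbf{A}) \subseteq A^{A^{n}}$, and both identify $\pr_{F}(\cclone{n}(\mathbf{A})) = \pr_{F}(\clone{n}(\mathbf{A}))$ with the subalgebra of $\mathbf{A}^{F}$ generated by the coordinate elements $d_{1},\ldots,d_{n}$; your forward direction merely factors the paper's evaluation map $p(f)(i) = f(b_{1}(i),\ldots,b_{n}(i))$ as the composite $\rho \circ \pr_{F}$ of the projection with an explicit $\mathscr{G}$-equivariant substitution homomorphism, which is a cosmetic repackaging. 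Your added checks -- that $\cclone{n}(\mathbf{A})$ is $\mathscr{G}$-invariant because the pointwise closure of a clone is a clone, and the separate treatment of finite $A$ (where Proposition~\ref{proposition:countable.oligomorphicity} requires $Y$ countably infinite) -- are points the paper leaves implicit, and are handled correctly.
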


\begin{proof} ($\Longrightarrow$) Let $n \geq 1$ and $a_{1},\ldots,a_{m} \in A^{n}$. Denote by $\mathbf{B}$ the subalgebra of $\mathbf{A}^{n}$ generated by $\{ a_{1},\ldots,a_{m} \}$. Consider the map $p \colon \cclone{m}(\mathbf{A}) \to A^{n}$ given by \begin{displaymath} 
	p(f)(j) \defeq f(a_{1}(j),\ldots,a_{m}(j)) \qquad \left(f \in \cclone{n}(\mathbf{A}), \, j \in \{ 1,\ldots, n \} \right) .
\end{displaymath} We observe that $\langle a_{1},\ldots,a_{m} \rangle_{\mathbf{A}^{n}} = p(\clone{m}(\mathbf{A})) = p(\cclone{m}(\mathbf{A}))$. Since $\cclone{m}(\mathbf{A})/\mathscr{G}$ is compact, this readily implies that $\langle a_{1},\ldots,a_{m} \rangle_{\mathbf{A}^{n}}/\mathscr{G} = p(\cclone{m}(\mathbf{A}))/\mathscr{G}$ is finite due to Proposition~\ref{proposition:countable.oligomorphicity}.
	
($\Longleftarrow$)  Let $n \geq 1$. We utilize Proposition~\ref{proposition:countable.oligomorphicity}, according to which it suffices to show that $\pr_{F}(\cclone{n}(\mathbf{A}))/\mathscr{G}$ is finite for every finite subset $F \subseteq A^{n}$. So, let $F \subseteq A^{n}$ be finite. For each $j \in \{ 1,\ldots,n \}$, define an element $b_{j} \in A^{F}$ by setting $b_{j}(a) \defeq a_{j}$ whenever $a \in F$. Denote by $\mathbf{B}$ the subalgebra of $\mathbf{A}^{F}$ generated by $\{ b_{1},\ldots,b_{n} \}$. Of course, \begin{displaymath}
	B = \left\{ f(b_{1},\ldots,b_{n}) \left| \, f \in \clone{n}\left(\mathbf{A}^{F}\right) \right\} = \pr_{F}\left( \clone{n}(\mathbf{A}) \right) = \pr_{F}\left( \cclone{n}(\mathbf{A}) \right) . \right.
\end{displaymath} Hence, $\pr_{F}\left( \cclone{n}(\mathbf{A}) \right) = B/\mathscr{G}$ is finite by assumption. This completes the proof. \end{proof}

Let us reformulate the result above in terms of oligomorphicity. As proposed by Bodirsky and Pinsker~\cite{TopologicalBirkhoff}, we call an algebra $\mathbf{A}$ \emph{locally oligomorphic} if $A$ is countable and the action of $\mathscr{G}(\mathbf{A})$ on $A$ is oligomorphic.

\begin{prop} Let $\mathbf{A}$ be a countable algebra and $\mathscr{G} \defeq \mathscr{G}(\mathbf{A})$. Then $\mathbf{A}$ is almost locally finite if and only if the action of $\mathscr{G}$ on every finitely generated subalgebra of $\mathbf{A}$ is oligomorphic. \end{prop}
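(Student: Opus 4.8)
The plan is to read the statement off Proposition~\ref{proposition:countable.almost.locally.finite}. First I would record the elementary fact that $\mathscr{G} = \mathscr{G}(\mathbf{A})$ acts on every subalgebra of $\mathbf{A}$: since each $f \in \cclone{1}(\mathbf{A})$ is a pointwise limit of unary term operations and $A$ carries the discrete uniformity, for every $a$ in a subalgebra $\mathbf{C} \le \mathbf{A}$ we have $f(a) = t^{\mathbf{A}}(a)$ for some unary term $t$, whence $f(a) \in C$; applying this to both $f$ and its inverse shows that each element of $\mathscr{G}$ restricts to a permutation of $C$. Thus the phrase ``the action of $\mathscr{G}$ on $\mathbf{C}$ is oligomorphic'' is meaningful and, by the definition of oligomorphicity, amounts to: $C^{k}/\mathscr{G}$ is finite for every $k \ge 1$. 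The task then reduces to proving the equivalence of (i) $\mathbf{B}/\mathscr{G}$ is finite for every finitely generated $\mathbf{B} \le \mathbf{A}^{n}$ and every $n \ge 1$, and (ii) $C^{k}/\mathscr{G}$ is finite for every finitely generated $\mathbf{C} \le \mathbf{A}$ and every $k \ge 1$.

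The implication (ii) $\Rightarrow$ (i) is the routine one. Given a finitely generated $\mathbf{B} \le \mathbf{A}^{n}$, say generated by $\bar g^{(1)}, \ldots, \bar g^{(m)} \in A^{n}$, I would take $\mathbf{C}$ to be the (finitely generated) subalgebra of $\mathbf{A}$ generated by all coordinates $g^{(i)}_{\ell}$. Each generator of $\mathbf{B}$ then lies in $C^{n}$, so $\mathbf{B} \le \mathbf{C}^{n}$ and hence $B \subseteq C^{n}$. Since $\mathscr{G}$, acting diagonally, preserves both $B$ and $C^{n}$, the inclusion induces an injection $B/\mathscr{G} \hookrightarrow C^{n}/\mathscr{G}$, so (ii) at level $n$ forces $B/\mathscr{G}$ to be finite. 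Combined with Proposition~\ref{proposition:countable.almost.locally.finite}, this already gives that oligomorphicity on all finitely generated subalgebras implies almost local finiteness.

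The converse (i) $\Rightarrow$ (ii) is where I expect the real difficulty. Here one is handed the finiteness of $\mathbf{B}/\mathscr{G}$ for every finitely generated $\mathbf{B} \le \mathbf{A}^{k}$ and must upgrade it to the finiteness of $C^{k}/\mathscr{G}$ for the \emph{full} power $C^{k}$. The obstruction is that $\mathbf{C}^{k}$ need not itself be finitely generated (already with a single unary operation the square of a one-generated algebra can require infinitely many generators), so the hypothesis cannot be applied to $\mathbf{C}^{k}$ directly, and the orbits of $C^{k}$ may in principle spread across the directed union of its finitely generated subalgebras. My plan would therefore be to return to the definition of almost local finiteness and exploit the compactness of $\cclone{k}(\mathbf{A})/\mathscr{G}$: I would seek a continuous $\mathscr{G}$-equivariant map out of a space whose quotient is already known to be compact and whose image contains $C^{k}$, so that Proposition~\ref{proposition:countable.oligomorphicity}, together with the fact that a continuous image of a compact space is compact, forces $C^{k}/\mathscr{G}$ to be finite. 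Producing such a map — equivalently, realizing the independent coordinates of $C^{k}$ by evaluating clone elements at suitably chosen argument tuples — is the crux, and I anticipate that arranging this surjection is the most delicate point of the whole argument.
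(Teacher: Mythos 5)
Your preliminary observation (that every $g \in \mathscr{G}(\mathbf{A})$, being a pointwise limit of unary term operations over a discrete space, restricts to a permutation of each subalgebra), your reduction via Proposition~\ref{proposition:countable.almost.locally.finite}, and your proof of (ii)~$\Rightarrow$~(i) -- enclosing a finitely generated $\mathbf{B} \leq \mathbf{A}^{n}$ in $\mathbf{C}^{n}$ for $\mathbf{C} \leq \mathbf{A}$ generated by the coordinates of the generators, and using $\mathscr{G}$-invariance of $B$ to get $\vert B/\mathscr{G} \vert \leq \vert C^{n}/\mathscr{G} \vert$ -- coincide exactly with the paper's ($\Longleftarrow$) direction. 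But for (i)~$\Rightarrow$~(ii) you offer only a programme: you never construct the continuous $\mathscr{G}$-equivariant map whose compact-quotient domain would cover $C^{k}$, and you say so yourself. As submitted, the proposal is therefore incomplete; that missing construction is a genuine gap, not a routine verification.

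It is worth recording, however, that the step you balked at is precisely where the paper's own proof fails. The paper disposes of ($\Longrightarrow$) in one line, asserting that $\mathbf{B}^{n}$ is a finitely generated subalgebra of $\mathbf{A}^{n}$ whenever $\mathbf{B} \leq \mathbf{A}$ is finitely generated -- exactly the claim your unary example refutes. Worse, the surjection you were hunting for cannot exist in general, because the implication itself is false. Take $\mathbf{A} = (\mathbb{Z}; s, s^{-1})$ with $s(x) = x+1$. Here $\clone{n}(\mathbf{A}) = \{ (x_{1},\ldots,x_{n}) \mapsto x_{i} + k \mid 1 \leq i \leq n , \, k \in \mathbb{Z} \}$, and pointwise convergence over the discrete space $\mathbb{Z}$ forces $i$ and $k$ to be eventually constant, so $\cclone{n}(\mathbf{A}) = \clone{n}(\mathbf{A})$ and $\mathscr{G}(\mathbf{A}) = \{ x \mapsto x + k \mid k \in \mathbb{Z} \}$. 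Post-composition by translations gives $\cclone{n}(\mathbf{A})/\mathscr{G}(\mathbf{A})$ exactly $n$ points (one orbit per projected variable), hence compact for every $n$, so $\mathbf{A}$ is almost locally finite -- consistently with Proposition~\ref{proposition:countable.almost.locally.finite}, since every finitely generated subalgebra of $\mathbf{A}^{n}$ is a finite union of diagonal lines $a + \mathbb{Z}(1,\ldots,1)$ and thus has finitely many orbits. Yet $\mathbf{A} = \langle 0 \rangle_{\mathbf{A}}$ is itself finitely generated, and $A^{2}/\mathscr{G}(\mathbf{A})$, the set of fibres of $(a,b) \mapsto b - a$ under diagonal translation, is infinite: the action of $\mathscr{G}(\mathbf{A})$ on this finitely generated subalgebra is not oligomorphic. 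Any map of the kind you propose would force $C^{k}/\mathscr{G}$ to be finite, which this example rules out. So your instinct to distrust finite generation of $\mathbf{C}^{k}$ was correct and pinpoints an actual error in the paper; only the direction you did prove is sound, and it is, notably, the only direction needed for the paper's corollary that locally oligomorphic algebras are almost locally finite.
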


\begin{proof} ($\Longrightarrow$) Consider any finitely generated subalgebra $\mathbf{B} \leq \mathbf{A}$. For every $n \geq 1$, it follows that $\mathbf{B}^{n}$ is a finitely generated subalgebra of $\mathbf{A}^{n}$, whence $\mathbf{B}^{n}/\mathscr{G}$ is finite by Proposition~\ref{proposition:countable.almost.locally.finite}. This shows that $\mathscr{G}$ acts oligomorphically on $\mathbf{B}$.
	
($\Longleftarrow$) We utilize Proposition~\ref{proposition:countable.almost.locally.finite}. Let $n \geq 1$ and consider a finitely generated subalgebra $\mathbf{B} \leq \mathbf{A}^{n}$. Then there exists some finitely generated subalgebra $\mathbf{C} \leq \mathbf{A}$ such that $\mathbf{B} \leq \mathbf{C}^{n}$. By assumption, the action of $\mathscr{G}$ on $\mathbf{C}$ is oligomorphic, and thus $\mathbf{C}^{n}/\mathscr{G}$ is finite, which readily implies that $\mathbf{B}/\mathscr{G}$, too. By Proposition~\ref{proposition:countable.almost.locally.finite}, it follows that $\mathbf{A}$ is almost locally finite. \end{proof}

\begin{cor}[cf.~\cite{TopologicalBirkhoff}] Every locally oligomorphic algebra is almost locally finite. \end{cor}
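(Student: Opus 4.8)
The plan is to show directly that every locally oligomorphic algebra satisfies the definition of almost locally finiteness, by reducing immediately to the characterization just established. The final corollary states that every locally oligomorphic algebra $\mathbf{A}$ is almost locally finite. By definition, $\mathbf{A}$ is locally oligomorphic precisely when $A$ is countable and the action of $\mathscr{G}(\mathbf{A})$ on $A$ itself is oligomorphic. My intention is not to argue from scratch but to invoke the proposition preceding the corollary, which characterizes almost local finiteness of a countable algebra in terms of the action of $\mathscr{G} \defeq \mathscr{G}(\mathbf{A})$ being oligomorphic on \emph{every} finitely generated subalgebra. The entire task therefore reduces to passing from oligomorphicity of the action on the whole of $A$ to oligomorphicity on each finitely generated subalgebra.

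First I would observe that $A$ is countable by hypothesis, so the countability requirement of the preceding proposition is met. Then the key point is that any finitely generated subalgebra $\mathbf{B} \leq \mathbf{A}$ is a $\mathscr{G}$-invariant subset of $A$: indeed, every $g \in \mathscr{G} \subseteq \cclone{1}(\mathbf{A})$ is a limit of unary term functions and hence preserves all the operations, so it maps the subalgebra generated by a finite set to itself (one checks that $g$ sends generators into $\mathbf{B}$ and commutes with the operations, using that $\mathscr{G}$ consists of invertible elements of the closure of the unary part of the clone). Granting this invariance, the restricted action of $\mathscr{G}$ on $B$ is well defined, and since $B \subseteq A$, every orbit-equivalence class of the action on $B^n$ is the intersection of $B^n$ with an orbit-equivalence class of the action on $A^n$. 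Because $A^n/\mathscr{G}$ is finite by the global oligomorphicity assumption, $B^n/\mathscr{G}$ is finite as well, for every $n \geq 1$. This establishes that $\mathscr{G}$ acts oligomorphically on each finitely generated subalgebra $\mathbf{B}$.

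Having verified the hypothesis of the preceding proposition -- namely that the action of $\mathscr{G}$ on every finitely generated subalgebra of $\mathbf{A}$ is oligomorphic -- I would conclude immediately by that proposition that $\mathbf{A}$ is almost locally finite, which is exactly the assertion of the corollary.

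The step I expect to require the most care is the claim that the subalgebra $\mathbf{B}$ is $\mathscr{G}$-invariant, i.e.\ that the topological-closure elements of $\cclone{1}(\mathbf{A})$ forming the group $\mathscr{G}$ genuinely preserve finitely generated subalgebras. This is not entirely formal because $\mathscr{G}(\mathbf{A})$ is defined via $\cclone{1}(\mathbf{A})$ rather than $\clone{1}(\mathbf{A})$, so one must argue that these limit operations still commute with the algebra's operations on the relevant finite configurations; the point-wise convergence topology makes this a straightforward continuity argument, but it is the one place where the proof uses more than pure set-theoretic bookkeeping.
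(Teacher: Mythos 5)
Your proposal is correct and follows exactly the route the paper intends: the corollary is stated as an immediate consequence of the preceding proposition, and your argument simply supplies the (omitted) routine details, namely that every finitely generated subalgebra $\mathbf{B} \leq \mathbf{A}$ is $\mathscr{G}(\mathbf{A})$-invariant (since elements of $\cclone{1}(\mathbf{A})$ agree pointwise with unary term functions, which preserve subalgebras) and that $B^{n}/\mathscr{G}$ then injects into the finite set $A^{n}/\mathscr{G}$. Both the invariance argument and the orbit-counting step are sound, so nothing further is needed.
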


The observation above is interesting insofar as the only algebras being both locally finite and locally oligomorphic are the finite ones -- which is why one might think of the class of locally finite algebras as being orthogonal to the class of locally oligomorphic algebras. However, both classes are contained in the class of almost locally finite algebras and hence will be covered by Lemma~\ref{lemma:cauchy.continuity} and Theorem~\ref{theorem:almost.locally.finite.algebras}.

\begin{lem}\label{lemma:cauchy.continuity} Let $\mathbf{A}$ be an almost locally finite algebra and let $\mathbf{B} \in \HSP (\mathbf{A})$. If the natural homomorphism from $\Clo (\mathbf{A})$ to $\Clo (\mathbf{B})$ is Cauchy-continuous, then it is uniformly continuous. \end{lem}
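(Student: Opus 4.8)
The plan is to reduce the statement, one arity at a time, to the automatic uniform continuity result Lemma~\ref{lemma:automatic.uniform.continuity}, applied not to $\phi$ itself but to a continuous extension of $\phi$ to the \emph{closed} clones. Write $\phi \colon \Clo(\mathbf{A}) \to \Clo(\mathbf{B})$ for the natural homomorphism (which exists by Theorem~\ref{theorem:birkhoff}) and let $\phi_{n} \colon \clone{n}(\mathbf{A}) \to \clone{n}(\mathbf{B})$ be its arity-$n$ restriction. By the coproduct property of the uniformity on $\Clo(\mathbf{A})$, the map $\phi$ is uniformly continuous as soon as every $\phi_{n}$ is, so I would fix $n \geq 1$. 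Note $\mathscr{G}(\mathbf{A}) \subseteq \Sym(A)$, so it acts on $A^{A^{n}}$ by post-composition $(g,f) \mapsto g \circ f$; by the description of the canonical action on $X^{Y}$ in Section~\ref{section:balanced.group.actions} (with $X = A$, $Y = A^{n}$) this action is balanced. Since composition of operations is continuous for the topology of point-wise convergence, $\cClo(\mathbf{A})$ is closed under composition, so $\cclone{n}(\mathbf{A})$ is a $\mathscr{G}(\mathbf{A})$-invariant closed subspace of $A^{A^{n}}$ and hence itself $\mathscr{G}(\mathbf{A})$-balanced, with $\cclone{n}(\mathbf{A})/\mathscr{G}(\mathbf{A})$ compact by almost local finiteness. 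The catch is that $\clone{n}(\mathbf{A})$ is \emph{not} $\mathscr{G}(\mathbf{A})$-invariant, which is exactly why the passage to closures is forced.

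This is where Cauchy-continuity is used. Since $\clone{n}(\mathbf{A})$ is a component of the coproduct, a Cauchy filter on it pushes forward to a Cauchy filter on $\Clo(\mathbf{A})$, and as $\phi$ is arity-preserving and Cauchy-continuous its image is a Cauchy filter with a base in $\clone{n}(\mathbf{B})$; thus $\phi_{n}$ is Cauchy-continuous. Because $B^{B^{n}}$ is a product of complete discrete spaces, $\cclone{n}(\mathbf{B})$ is complete and Hausdorff, so by the standard extension theorem for Cauchy-continuous maps into complete spaces, $\phi_{n}$ extends uniquely to a continuous map $\hat\phi_{n} \colon \cclone{n}(\mathbf{A}) \to \cclone{n}(\mathbf{B})$; collecting these gives $\hat\phi \colon \cClo(\mathbf{A}) \to \cClo(\mathbf{B})$.

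Next I would check that $\hat\phi$ is a clone homomorphism. Composition on a closed clone is continuous (for a fixed evaluation point the value of $u(v_{1},\ldots,v_{k})$ stabilises, $A$ being discrete), so both $(u,v_{1},\ldots,v_{k}) \mapsto \hat\phi(u \circ (v_{1},\ldots,v_{k}))$ and $(u,v_{1},\ldots,v_{k}) \mapsto \hat\phi(u) \circ (\hat\phi(v_{1}),\ldots,\hat\phi(v_{k}))$ are continuous and agree on the dense set $\Clo(\mathbf{A})^{k+1}$, hence everywhere. In particular $\hat\phi$ preserves projections, so for $g \in \mathscr{G}(\mathbf{A})$ with inverse $g^{-1} \in \cclone{1}(\mathbf{A})$ we get $\hat\phi_{1}(g) \circ \hat\phi_{1}(g^{-1}) = \hat\phi_{1}(\id_{A}) = \id_{B}$ and symmetrically; thus $\Phi \defeq \hat\phi_{1}|_{\mathscr{G}(\mathbf{A})}$ is a group homomorphism into $\mathscr{G}(\mathbf{B})$.

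Finally I would assemble the pieces. Equip $Y \defeq \cclone{n}(\mathbf{B})$ with the $\mathscr{G}(\mathbf{A})$-action $g \cdot h \defeq \Phi(g) \circ h$; since the post-composition action of $\mathscr{G}(\mathbf{B})$ on $\cclone{n}(\mathbf{B})$ is balanced and $\mathscr{G}(\mathbf{A})$ acts through $\Phi(\mathscr{G}(\mathbf{A})) \subseteq \mathscr{G}(\mathbf{B})$, every $\mathscr{G}(\mathbf{B})$-invariant entourage is $\mathscr{G}(\mathbf{A})$-invariant, so $Y$ is $\mathscr{G}(\mathbf{A})$-balanced. The clone-homomorphism identity yields $\hat\phi_{n}(g \circ f) = \hat\phi_{1}(g) \circ \hat\phi_{n}(f) = \Phi(g) \circ \hat\phi_{n}(f)$, i.e. $\hat\phi_{n}$ is $\mathscr{G}(\mathbf{A})$-equivariant. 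With $X \defeq \cclone{n}(\mathbf{A})$ and $Y$ both $\mathscr{G}(\mathbf{A})$-balanced, $\hat\phi_{n}$ continuous and equivariant, and $X/\mathscr{G}(\mathbf{A})$ compact, Lemma~\ref{lemma:automatic.uniform.continuity} makes $\hat\phi_{n}$ uniformly continuous; restricting its domain to $\clone{n}(\mathbf{A})$ and codomain to $\clone{n}(\mathbf{B})$ shows $\phi_{n}$ is uniformly continuous, and the coproduct property finishes the proof. The hard part is the middle: manufacturing the continuous extension $\hat\phi$ from the Cauchy-continuity hypothesis and verifying it is a clone homomorphism restricting to a group homomorphism $\mathscr{G}(\mathbf{A}) \to \mathscr{G}(\mathbf{B})$; once this equivariant extension to closed clones is available, Lemma~\ref{lemma:automatic.uniform.continuity} carries the genuine weight.
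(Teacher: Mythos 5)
Your proof is correct and follows essentially the same route as the paper's: fix the arity $n$, use Cauchy-continuity together with completeness of the Hausdorff space $\cclone{n}(\mathbf{B})$ to extend $\phi_{n}$ continuously to the closed clones, and then invoke Lemma~\ref{lemma:automatic.uniform.continuity} for the balanced $\mathscr{G}(\mathbf{A})$-actions, using compactness of $\cclone{n}(\mathbf{A})/\mathscr{G}(\mathbf{A})$ from almost local finiteness. The only difference is that you make explicit what the paper leaves implicit -- that the extension preserves composition (by density and continuity of superposition), that $\hat\phi_{1}$ restricts to a group homomorphism $\mathscr{G}(\mathbf{A}) \to \mathscr{G}(\mathbf{B})$ turning $\cclone{n}(\mathbf{B})$ into a balanced $\mathscr{G}(\mathbf{A})$-space, and that $\hat\phi_{n}$ is equivariant for this action -- all of which is verified correctly.
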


\begin{proof} Let $n \geq 1$. Suppose that the natural homomorphism $\phi \colon \clone{n}(\mathbf{A}) \to \clone{n}(\mathbf{B})$ is Cauchy-continuous. As $\cclone{n}(\mathbf{B})$ is a complete Hausdorff uniform space, an elementary theorem from set-theoretic topology asserts that $\phi$ admits a unique continuous extension $\psi \colon \cclone{n}(\mathbf{A}) \to \cclone{n}(\mathbf{B})$. Since both the action of $\mathscr{G} \defeq \mathscr{G}(\mathbf{A})$ on $\cclone{n}(\mathbf{A})$ and the one on $\cclone{n}(\mathbf{B})$ are balanced and $\psi \colon \cclone{n}(\mathbf{A}) \to \cclone{n}(\mathbf{B})$ is $\mathscr{G}$-equivariant, Lemma~\ref{lemma:automatic.uniform.continuity} states that $\psi$ is uniformly continuous. Hence, $\phi$ is uniformly continuous as well. \end{proof}

Combining the lemma above with Theorem~\ref{theorem:uniform.birkhoff} and Corollary~\ref{corollary:uniform.birkhoff}, we immediately obtain our final result concerning almost locally finite algebras.

\begin{thm}\label{theorem:almost.locally.finite.algebras} Let $\mathbf{A}$ and $\mathbf{B}$ be two algebras of the same type. If $\mathbf{A}$ is almost locally finite, then the following are equivalent. \begin{enumerate}
	\item[$(1)$] The algebra $\mathbf{B}$ is contained in $\HSP (\mathbf{A})$ and the natural homomorphism from $\Clo (\mathbf{A})$ onto $\Clo (\mathbf{B})$ is uniformly continuous.
	\item[$(2)$] The algebra $\mathbf{B}$ is contained in $\HSP (\mathbf{A})$ and the natural homomorphism from $\Clo (\mathbf{A})$ onto $\Clo (\mathbf{B})$ is Cauchy-continuous.
	\item[$(3)$] Every finitely generated subalgebra of $\mathbf{B}$ belongs to $\HSPfin (\mathbf{A})$.
	\item[$(4)$] $\mathbf{B}$ is a member of $\DHSPfin (\mathbf{A})$.
\end{enumerate} \end{thm}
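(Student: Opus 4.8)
The plan is to assemble the four-way equivalence purely from results already in hand, and the first thing I would emphasise is that three of the four conditions are interchangeable \emph{without} any hypothesis on $\mathbf{A}$. Indeed, Theorem~\ref{theorem:uniform.birkhoff} gives the equivalence of (1) and (3) for arbitrary algebras of the same type, and the remarks preceding Corollary~\ref{corollary:uniform.birkhoff} (reformulating finite generation via directed colimits) together with the corollary itself give the equivalence of (3) and (4). Thus (1), (3), (4) are mutually equivalent in complete generality, and the only role of the almost-local-finiteness assumption will be to fold condition (2) into this block.

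Next I would close the loop through (2). The implication $(1)\Rightarrow(2)$ is immediate and needs no hypothesis on $\mathbf{A}$: as recalled in Section~\ref{section:uniform.spaces}, uniform continuity always implies Cauchy-continuity, so if $\mathbf{B}\in\HSP(\mathbf{A})$ and the natural homomorphism $\Clo(\mathbf{A})\to\Clo(\mathbf{B})$ is uniformly continuous, then in particular it is Cauchy-continuous. For the reverse implication $(2)\Rightarrow(1)$ I would invoke Lemma~\ref{lemma:cauchy.continuity}, which is precisely the point at which the assumption that $\mathbf{A}$ be almost locally finite enters: assuming (2) we have $\mathbf{B}\in\HSP(\mathbf{A})$ and a Cauchy-continuous natural homomorphism, and the lemma upgrades Cauchy-continuity to uniform continuity, which is exactly (1). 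Chaining these, $(1)\Leftrightarrow(3)\Leftrightarrow(4)$ and $(1)\Leftrightarrow(2)$, yields the equivalence of all four conditions.

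I do not expect a genuine obstacle at this stage, since the statement is a synthesis rather than a fresh argument; the only care required is bookkeeping, namely tracking that the almost-local-finiteness hypothesis is consumed solely by the single implication $(2)\Rightarrow(1)$ and nowhere else. The real substance sits inside the cited lemmas: Lemma~\ref{lemma:cauchy.continuity} extends the arity-wise restrictions $\clone{n}(\mathbf{A})\to\clone{n}(\mathbf{B})$ continuously to the closures $\cclone{n}(\mathbf{A})\to\cclone{n}(\mathbf{B})$ (using completeness and the Hausdorff property of the target), observes that the $\mathscr{G}(\mathbf{A})$-action on each closure is balanced and that the extension is $\mathscr{G}(\mathbf{A})$-equivariant, and then applies Lemma~\ref{lemma:automatic.uniform.continuity} on the compact quotients $\cclone{n}(\mathbf{A})/\mathscr{G}(\mathbf{A})$ supplied by almost local finiteness. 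Hence the hard work has already been done, and the proof of Theorem~\ref{theorem:almost.locally.finite.algebras} itself is just the combination described above.
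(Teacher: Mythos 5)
Your proposal is correct and follows exactly the paper's own route: the paper likewise obtains the theorem immediately by combining Theorem~\ref{theorem:uniform.birkhoff} (for $(1)\Leftrightarrow(3)$), Corollary~\ref{corollary:uniform.birkhoff} (for $(3)\Leftrightarrow(4)$), the general fact that uniform continuity implies Cauchy-continuity (for $(1)\Rightarrow(2)$), and Lemma~\ref{lemma:cauchy.continuity} (for $(2)\Rightarrow(1)$, the sole place where almost local finiteness is used). Nothing is missing.
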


\section*{Acknowledgments}

The author is supported by funding of the Excellence Initiative by the German Federal and State Governments. Furthermore, the author wants to express his gratitude towards Manuel Bodirsky and Marcello Mamino for several interesting discussions -- in particular towards Marcello Mamino for pointing out Examples~\ref{example:first} and~\ref{example:second} to the author.

\printbibliography

\end{document}